\numberwithin{equation}{section}
\newcommand{\MAT}{\left[ \begin{array}}  
\newcommand{\mat}{\end{array} \right]}
\newtheorem{Definition}{Definition}[section]
\newtheorem{Corollary}{Corollary}[section]
\newtheorem{Lemma}{Lemma}[section]
\newtheorem{Theorem}{Theorem}[section]
\newtheorem{remark}{Remark}[section]
\def \C {\mathbbm{C}}
\def \R {\mathbbm{R}}
\def \u {{\bf u}}
\def \v {{\bf v}}
\def \x {{\bf x}}
\def \y {{\bf y}}
\begin{document}

\title{On Outer Bi-Lipschitz Extensions of Linear Johnson-Lindenstrauss Embeddings of Low-Dimensional Submanifolds of $\mathbbm{R}^N$}

\author{Mark A. Iwen\thanks{Department of Mathematics, and Department of Computational Mathematics, Science and Engineering (CMSE), Michigan State University, East Lansing, MI 48824.  {\bf E-mail:} iwenmark@msu.edu} ~~and~ Mark Philip Roach\thanks{Department of Mathematics, Michigan State University, East Lansing, MI 48824.  {\bf E-mail:}  roachma3@msu.edu}}
%

\maketitle
\begin{abstract}
Let $\mathcal{M}$ be a compact $d$-dimensional submanifold of $\mathbbm{R}^N$ with reach $\tau$ and volume $V_{\mathcal M}$.  Fix $\epsilon \in (0,1)$.  In this paper we prove that a nonlinear function $f: \mathbbm{R}^N \rightarrow \mathbbm{R}^{m}$ exists with $m \leq C \left(d / \epsilon^2 \right) \log \left(\frac{\sqrt[d]{V_{\mathcal M}}}{\tau} \right)$ such that 
$$(1 - \epsilon) \| {\bf x} - {\bf y} \|_2 \leq \left\| f({\bf x}) - f({\bf y}) \right\|_2 \leq (1 + \epsilon) \| {\bf x} - {\bf y} \|_2$$
holds for all ${\bf x} \in \mathcal{M}$ and ${\bf y} \in \mathbbm{R}^N$.  In effect, $f$ not only serves as a bi-Lipschitz function from $\mathcal{M}$ into $\mathbbm{R}^{m}$ with
bi-Lipschitz constants close to one, but also approximately preserves all distances from points not in $\mathcal{M}$ to all points in $\mathcal{M}$ in its image.  Furthermore, the proof is constructive and yields an algorithm which works well in practice.  In particular, it is empirically demonstrated herein that such nonlinear functions allow for more accurate compressive nearest neighbor
classification than standard linear Johnson-Lindenstrauss embeddings do in practice.  
\end{abstract}

\section{Introduction}

The classical Kirszbraun theorem \cite{kirszbraun1934zusammenziehende} ensures that a Lipschitz continuous function $f: S \rightarrow \mathbbm{R}^m$ from a subset $S \subset \mathbbm{R}^N$ into $\mathbbm{R}^m$ can always be extended to a function $\tilde{f}: \mathbbm{R}^N \rightarrow \mathbbm{R}^m$ with the same Lipschitz constant as $f$.  More recently, similar results have been proven for bi-Lipschitz functions, $f: S \rightarrow \mathbbm{R}^m$, from $S\subset \mathbbm{R}^N$ into $\mathbbm{R}^m$ in the theoretical computer science literature.  In particular, it was shown in \cite{MMMR2018} that outer extensions of such bi-Lipschitz functions $f$, $\tilde{f}: \mathbbm{R}^N \rightarrow \mathbbm{R}^{m+1}$, exist which both $(i)$ approximately preserve $f$'s Lipschitz constants, and which $(ii)$ satisfy $\tilde{f}({\bf x}) = (f({\bf x}),0)$ for all ${\bf x} \in S$.  Narayanan and Nelson \cite{NN2019} then applied similar outer extension methods to a special class of the linear bi-Lipschitz maps guaranteed to exist for any given finite set $S \subset \mathbbm{R}^N$ by Johnson-Lindenstrauss (JL) lemma \cite{johnson1984extensions} in order prove the following remarkable result:  For each finite set $S \subset \mathbbm{R}^N$ and $\epsilon \in (0,1)$ there exists a {\bf terminal embedding of $S$}, $f: \mathbbm{R}^N \rightarrow \mathbbm{R}^{\mathcal{O} \left( \log |S| / \epsilon^2 \right)}$, with the property that
\begin{equation}
(1 - \epsilon) \| {\bf x} - {\bf y} \|_2 \leq \left\| f({\bf x}) - f({\bf y}) \right\|_2 \leq (1 + \epsilon) \| {\bf x} - {\bf y} \|_2
\label{OptimalTermEmbed}
\end{equation}
holds $\forall {\bf x} \in S$ and $\forall {\bf y} \in \mathbbm{R}^N$.  

In this paper we generalize Narayanan and Nelson's theorem for finite sets to also hold for infinite subsets $S \subset \mathbbm{R}^N$, and then give a specialized variant for the case where the infinite subset $S \subset \mathbbm{R}^N$ in question is a compact and smooth submanifold of $\mathbbm{R}^N$.  As we shall see below, generalizing this result requires us to both alter the bi-Lipschitz extension methods of \cite{MMMR2018} as well as to replace the use of embedding techniques utilizing cardinality in \cite{NN2019} with different JL-type embedding methods involving alternate measures of set complexity which remain meaningful for infinite sets (i.e., the Gaussian width of the unit secants of the set $S$ in question).  In the special case where $S$ is a submanifold of $\mathbbm{R}^N$, recent results bounding the Gaussian widths of the unit secants of such sets in terms of other fundamental geometric quantities (e.g., their reach, dimension, volume, etc.) \cite{DBLP:journals/corr/abs-2110-04193} can then be brought to bear in order to produce terminal manifold embeddings of $S$ into $\mathbbm{R}^m$ satisfying \eqref{OptimalTermEmbed} with $m$ near-optimally small.

Note that a non-trivial terminal embedding, $f$, of $S$ satisfying \eqref{OptimalTermEmbed} for all ${\bf x} \in S$ and ${\bf y} \in \mathbbm{R}^N$ must be nonlinear.  In contrast, prior work on bi-Lipschitz maps of submanifolds of $\mathbbm{R}^N$ into lower dimensional Euclidean space in the mathematical data science literature have all utilized {\it linear} maps (see, e.g., \cite{baraniuk2009random,eftekhari2015new,DBLP:journals/corr/abs-2110-04193}).  As a result, it is impossible for such previously considered linear maps to serve as terminal embeddings of submanifolds of $\mathbbm{R}^N$ into lower-dimensional Euclidean space without substantial modification.  Another way of viewing the work carried out herein is that it constructs outer bi-Lipschitz extensions of such prior linear JL embeddings of manifolds in a way that effectively preserves their near-optimal embedding dimension in the final resulting extension.  Motivating applications of terminal embeddings of submanifolds of $\mathbbm{R}^N$ related to compressive classification via manifold models \cite{davenport2007smashed} are discussed next.

\subsection{Universally Accurate Compressive Classification via Noisy Manifold Data}
\label{sec:motivating_application}

It is one of the sad facts of life that most everyone eventually comes to accept:  everything living must eventually die, you can't always win, you aren't always right, and -- worst of all to the most dedicated of data scientists -- there is always noise contaminating your datasets.  Nevertheless, there are mitigating circumstances and achievable victories implicit in every statement above -- most pertinently here, there are mountains of empirical evidence that noisy training data still permits accurate learning.  In particular, when the noise level is not too large, the mere existence of a low-dimensional data model which only approximately fits your noisy training data can still allow for successful, e.g., nearest-neighbor classification using only a highly compressed version of your original training dataset (even when you know very little about the model specifics) \cite{davenport2007smashed}.  Better quantifying these empirical observations in the context of low-dimensional manifold models is the primary motivation for our main result below.

For example, let $\mathcal{M} \subset \mathbbm{R}^N$ be a $d$-dimensional submanifold of $\mathbbm{R}^N$ (our data model), fix $\delta \in \mathbbm{R}^+$ (our effective noise level), and choose $T \subseteq {\rm tube}(\delta,\mathcal{M}) := \left \{ {\bf x} ~|~ \exists {\bf y} \in \mathcal{M} ~{\rm with}~ \| {\bf x} - {\bf y} \|_2 \leq \delta \right\}$ (our ``noisy'' and potentially high-dimensional training data).  Fix $\epsilon \in (0,1)$.  For a terminal embedding $f: \mathbbm{R}^N \rightarrow \mathbbm{R}^m$ of $\mathcal{M}$ as per \eqref{OptimalTermEmbed}, one can see that
\begin{equation}
 (1 - \epsilon) \left\| {\bf z} - {\bf t} \right\|_2 -2(1-\epsilon) \delta \leq \left\| f({\bf z}) - f( {\bf t}) \right\|_2 \leq (1 + \epsilon) \left\| {\bf z} - {\bf t} \right\|_2 + 2(1+\epsilon) \delta 
\label{equ:DisttoMan}
\end{equation}
will hold simultaneously for all ${\bf z} \in \mathbbm{R}^N$ and ${\bf t} \in T$, {\it where $f$ has an embedding dimension that only depends on the geometric properties of ${\mathcal M}$ (and not necessarily on $|T|$)}.\footnote{One can prove \eqref{equ:DisttoMan} by comparing both ${\bf z}$ and ${\bf t}$ to a point ${\bf x_t} \in \mathcal{M}$ satisfying $\| {\bf t} - {\bf x_t} \|_2 \leq \delta$ via several applications of the (reverse) triangle inequality.}  Thus, if $T$ includes a sufficiently dense external cover of $\mathcal{M}$, then $f$ will allow us to approximate the distance of all ${\bf z} \in \mathbbm{R}^N$ to $\mathcal{M}$ in the compressed embedding space via the estimator 
\begin{equation}
\tilde{d}(f({\bf z}),f(T)) := \inf_{{\bf t } \in T} \left\| f({\bf z}) - f( {\bf t}) \right\|_2 \approx d({\bf z},\mathcal{M}) := \inf_{{\bf y} \in \mathcal{M}} \| {\bf z} - {\bf y} \|_2
\label{equ:CompressedEstimator}
\end{equation} 
up to $\mathcal{O}(\delta)$-error.  As a result, if one has noisy data from two disjoint manifolds $\mathcal{M}_1,\mathcal{M}_2 \subset \mathbbm{R}^N$, one can use this compressed $\tilde{d}$ estimator to correctly classify all data ${\bf z} \in {\rm tube}(\delta,\mathcal{M}_1) \bigcup {\rm tube}(\delta,\mathcal{M}_2)$ as being in either $T_1 := {\rm tube}(\delta,\mathcal{M}_1)$ (class 1) or $T_2 := {\rm tube}(\delta,\mathcal{M}_2)$ (class 2) as long as $\displaystyle \inf_{{\bf x} \in T_1, {\bf y} \in T_2} \| {\bf x} - {\bf y} \|_2$ is sufficiently large.  In short, terminal manifold embeddings demonstrate that accurate compressive nearest-neighbor classification based on noisy manifold training data is always possible as long as the manifolds in question are sufficiently far apart (though not necessarily separable from one another by, e.g., a hyperplane, etc.).  

Note that in the discussion above we may in fact take $T = {\rm tube}(\delta,\mathcal{M})$. In that case \eqref{equ:DisttoMan} will hold simultaneously for all ${\bf z} \in \mathbbm{R}^N$ and $({\bf t}, \delta) \in \mathbbm{R}^N \times \mathbbm{R}^+$ with ${\bf t} \in {\rm tube}(\delta,\mathcal{M})$ so that $f:  \mathbbm{R}^N \rightarrow \mathbbm{R}^m$ will approximately preserve the distances of all points ${\bf z} \in \mathbbm{R}^N$ to ${\rm tube}(\delta,\mathcal{M})$ up to errors on the order of $\mathcal{O}(\epsilon) d({\bf z}, {\rm tube}(\delta,\mathcal{M})) + \mathcal{O}(\delta)$ for all $\delta \in \mathbbm{R}^+$.  This is in fact rather remarkable when one recalls that the best achievable embedding dimension, $m$, here only depends on the geometric properties of the low-dimensional manifold $\mathcal{M}$ (see Theorem~\ref{Thm:MainResult} for a detailed accounting of these dependences).  

We further note that alternate applications of Theorem~\ref{Thm:OptimalExistence} (on which Theorem~\ref{Thm:MainResult} depends) involving other data models are also possible.  As a more explicit second example, suppose that $\mathcal{M}$ is a union of $n$ $d$-dimensional affine subspaces so that its unit secants, $S_{\mathcal{M}}$ defined as per \eqref{equ:UnitSecs}, are contained in the union of at most ${n \choose 2} + n$ unit spheres $\subset \mathbb{S}^{N-1}$, each of dimension at most $2d+1$.  The Gaussian width (see Definition~\ref{def:GaussianWidth}) of $S_{\mathcal{M}}$ can then be upper-bounded by $C \sqrt{d+\log n}$ using standard techniques, where $C \in \mathbbm{R}^+$ is an absolute constant.  
An application of Theorem~\ref{Thm:OptimalExistence} now guarantees the existence of a terminal embedding $f: \mathbbm{R}^N \rightarrow \mathbbm{R}^{\mathcal{O}\left(\frac{d + \log n}{\epsilon^2}\right)}$ which will allow approximate nearest subspace queries to be answered for any input point ${\bf z} \in \mathbbm{R}^N$ using only $f({\bf z})$ in the compressed $\mathcal{O}\left(\frac{d + \log n}{\epsilon^2}\right)$-dimensional space.  Even more specifically, if we choose, e.g., $\mathcal{M}$ to consist of all at most $s$-sparse vectors in $\mathbbm{R}^N$ (i.e., so that $\mathcal{M}$ is the union of $n = {N \choose s}$ subspaces of $\mathbbm{R}^N$), we can now see that Theorem~\ref{Thm:OptimalExistence} guarantees the existence of a deterministic compressed estimator \eqref{equ:CompressedEstimator} which allows for the accurate approximation of the best $s$-term approximation error $\displaystyle \inf_{{\bf y} \in \mathbbm{R}^N ~\textrm{at most}~s~\textrm{sparse}} \| {\bf z} - {\bf y} \|_2$ for all ${\bf z} \in \mathbbm{R}^N$ using only $f({\bf z}) \in \mathbbm{R}^{\mathcal{O}(s \log (N/s))}$ as input.  Note that this is only possible due to the non-linearity of $f$ herein.  In, e.g., the setting of classical compressive sensing theory where $f$ must be linear it is known that such good performance is impossible \cite[Section 5]{cohen2009compressed}.

\subsection{The Main Result and a Brief Outline of Its Proof}

The following theorem is proven in Section~\ref{sec:MainThmProof}.  Given a low-dimensional submanifold $\mathcal{M}$ of $\mathbbm{R}^N$ it establishes the existence of a function $f: \mathbbm{R}^N \rightarrow \mathbbm{R}^m$ with $m \ll N$ that approximately preserves the Euclidean distances from all points in $\mathbbm{R}^N$ to all points in $\mathcal{M}$.  As a result, it guarantees the existence of a low-dimensional embedding which will, e.g., always allow for the correct compressed nearest-neighbor classification of images living near different well separated submanifolds of Euclidean space.

\begin{Theorem}[The Main Result] \label{Thm:MainResult}
Let $\mathcal{M}  \hookrightarrow \mathbbm{R}^N$ be a compact $d$-dimensional submanifold of $\mathbbm{R}^N$ 
with boundary $\partial \mathcal{M}$, finite reach $\tau_{\mathcal{M}}$ (see Definition~\ref{def:Reach}), and volume  $V_{\mathcal M}$.  Enumerate the connected components of $\partial \mathcal{M}$ and let $\tau_i$ be the reach of the $i^{\rm th}$ connected component of $\partial \mathcal M$ as a submanifold of $\mathbbm{R}^N$. 
Set $\tau := \min_{i} \{\tau_{\mathcal M}, \tau_i \}$, let $V_{\partial \mathcal{M}}$ be the volume of $\partial \mathcal{M}$, and denote the volume of the $d$-dimensional Euclidean ball of radius $1$ by $\omega_d$.  Next, 
\begin{enumerate}
    \item if $d=1$, define $\alpha_{\mathcal M} :=  \frac{20 V_{\mathcal M}}{\tau}  + V_{\partial {\mathcal M}}$, else
    \item if $d \geq 2$, define $\alpha_{\mathcal M} :=
\frac{V_\mathcal{M}}{ \omega_d} \left(\frac{41}{\tau} \right)^d 
+ \frac{V_{\partial \mathcal{M}}}{ \omega_{d-1}} \left(\frac{81}{\tau} \right)^{d-1}$.  
\end{enumerate}
Finally, fix $\epsilon \in (0,1)$ and define 
\begin{align}
\beta_{\mathcal{M}} &:= \left(\alpha_{\mathcal M}^2 +3^{d} \alpha_{\mathcal M} \right). 
\end{align}
Then, there exists a map $f: \R^N \rightarrow \C^{m}$ with 
$m \leq c \left(\ln \left(\beta_{\mathcal{M}} \right)+4d \right) / \epsilon^2$
that satisfies
\begin{equation}
\label{equ:optimaldiscussion}
\left| \left\| f(\x) - f(\y) \right\|^2_2 - \left\| \x - \y \right\|_2^2 \right| \leq \epsilon \left\| \x - \y \right\|_2^2
\end{equation}
for all $\x \in \mathcal{M}$ and $\y \in \R^N$.  Here $c \in \mathbbm{R}^+$ is an absolute constant independent of all other quantities.
\end{Theorem}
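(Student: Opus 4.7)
My plan is to derive Theorem~\ref{Thm:MainResult} from the general terminal-embedding existence result Theorem~\ref{Thm:OptimalExistence} (referenced in the introduction) applied to $S = \mathcal{M}$, combined with a quantitative bound on the Gaussian width of the unit secant set $S_{\mathcal{M}} := \{(\mathbf{x}-\mathbf{y})/\|\mathbf{x}-\mathbf{y}\|_2 : \mathbf{x} \neq \mathbf{y} \in \mathcal{M}\}$. As the introduction indicates, Theorem~\ref{Thm:OptimalExistence} produces a terminal embedding $f : \mathbbm{R}^N \to \mathbbm{C}^m$ satisfying \eqref{equ:optimaldiscussion} for arbitrary $S \subset \mathbbm{R}^N$ whose target dimension $m$ is governed (up to an absolute constant and a $1/\epsilon^2$ factor) by the squared Gaussian width $\omega^2(S_S)$. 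The problem therefore reduces to bounding $\omega(S_{\mathcal{M}})^2$ by something of order $\ln \beta_{\mathcal{M}} + d$.

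To bound $\omega(S_{\mathcal{M}})$ I would estimate covering numbers of $S_{\mathcal{M}}$ in the Euclidean metric and feed them into Dudley's entropy integral. The heart of the argument is a volume-and-reach-based $\tau$-net of $\mathcal{M}$ of cardinality at most $\alpha_{\mathcal{M}}$: the two cases $d=1$ versus $d \geq 2$ and the separate accounting of boundary components reflect the standard volume-packing estimates (interior contribution $V_{\mathcal{M}}/(\omega_d \tau^d)$ together with a lower-dimensional boundary contribution $V_{\partial \mathcal{M}}/(\omega_{d-1}\tau^{d-1})$), which are made fully quantitative in \cite{DBLP:journals/corr/abs-2110-04193}. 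The reach hypothesis ensures that $\mathcal{M}$ does not accumulate in any small Euclidean ball faster than the corresponding tangent plane predicts; this is what pins down the explicit constants $41$, $81$, $20$ appearing in $\alpha_{\mathcal{M}}$, with the reach $\tau_i$ of each boundary component handled in parallel.

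Given the $\tau$-net of $\mathcal{M}$, I would construct a cover of $S_{\mathcal{M}}$ by splitting pairs $(\mathbf{x},\mathbf{y})$ according to whether $\|\mathbf{x}-\mathbf{y}\|_2 \geq \tau$ or $<\tau$. \emph{Far} pairs are covered directly by pairs of net points, yielding the $\alpha_{\mathcal{M}}^2$ term. \emph{Near} pairs have secants that, by the reach bound, lie within fixed angular distance of some unit vector in a local tangent space at one of the net points; each such $d$-dimensional tangent sphere admits a standard $3^d$ volumetric cover at constant scale, producing the $3^d \alpha_{\mathcal{M}}$ term and the final cardinality $\beta_{\mathcal{M}} = \alpha_{\mathcal{M}}^2 + 3^d \alpha_{\mathcal{M}}$. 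Integrating the resulting log-covering-number bound via Dudley's inequality yields $\omega^2(S_{\mathcal{M}}) \lesssim \ln \beta_{\mathcal{M}} + d$, after which substitution into Theorem~\ref{Thm:OptimalExistence} produces the stated $m \leq c(\ln \beta_{\mathcal{M}} + 4d)/\epsilon^2$.

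The main obstacle will be the near-pair regime of the secant cover: obtaining a tangent-space approximation quantitative enough to yield an $O(3^d \alpha_{\mathcal{M}})$ cover with the correct explicit constants, while handling the interior and each boundary component in parallel. Fortunately the bulk of this geometric bookkeeping is already carried out in \cite{DBLP:journals/corr/abs-2110-04193}, so the most nontrivial work specific to this theorem is the assembly of those estimates into the exact form of $\alpha_{\mathcal{M}}$ and $\beta_{\mathcal{M}}$ and the verification that the resulting Gaussian-width bound plugs cleanly into the conclusion of Theorem~\ref{Thm:OptimalExistence}.
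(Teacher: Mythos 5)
Your proposal matches the paper's proof: Theorem~\ref{Thm:MainResult} is obtained exactly by combining the terminal-embedding existence result (Theorem~\ref{Thm:OptimalExistence}) with the Gaussian-width bound $w(S_{\mathcal{M}}) \leq 8\sqrt{2}\sqrt{\ln\left(\beta_{\mathcal{M}}\right)+4d}$ of Theorem~\ref{GaussianWidthOfManifodWithBoundaryViaGunther}, so that $m \leq c\left(w(S_{\mathcal{M}})/\epsilon\right)^2$ becomes the stated bound after absorbing constants. The only difference is that the paper imports that width bound wholesale as a restatement of Theorem~20 of \cite{DBLP:journals/corr/abs-2110-04193} rather than re-deriving the covering-number and Dudley-integral argument you sketch.
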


\begin{proof}
See Section \ref{sec:MainThmProof}.
\end{proof}

The remainder of the paper is organized as follows.  In Section~\ref{sec:Prelims} we review notation and state a result from \cite{DBLP:journals/corr/abs-2110-04193} that bounds the Gaussian width of the unit secants of a given submanifold of $\mathbbm{R}^N$ in terms of geometric quantities of the original submanifold.  Next, in Section~\ref{sec:ExtensionRes} we prove an optimal terminal embedding result for arbitrary subsets of $\mathbbm{R}^N$ in terms of the Gaussian widths of their unit secants by generalizing results from the computer science literature concerning finite sets \cite{MMMR2018,NN2019}.  See Theorem~\ref{Thm:OptimalExistence} therein.  We then combine results from Sections~\ref{sec:Prelims} and~\ref{sec:ExtensionRes} in order to prove our main theorem in Section \ref{sec:MainThmProof}.  Finally, in Section~\ref{sec:Numerics} we conclude by demonstrating that terminal embeddings allow for more accurate compressive nearest neighbor classification than standard linear embeddings in practice.

\section{Notation and Preliminaries}
\label{sec:Prelims}

Below $B^N_{\ell^2}({\bf \x},\gamma)$ will denote the open Euclidean ball around $\x$ of radius $\gamma$ in $\R^N$.  Given an arbitrary subset $S \subset \R^N$, we will further define 
$- S := \left\{ -\x ~\big|~ \x \in S \right\}$ and $S \pm S := \left\{ {\bf x} \pm {\bf y} ~\big|~ {\bf x}, {\bf y} \in S \right\}$.  Finally, for a given $T \subset \mathbbm{R}^N$ we will also let $\overline{T}$ denote its closure, and further define the normalization operator $U: \mathbbm{R}^N \setminus \{ {\bf 0} \} \rightarrow \mathbb{S}^{N-1}$ to be such that $U(\x) := \x / \| \x \|_2$.  With this notation in hand we can then define the {\bf unit secants of $T \subset \mathbbm{R}^N$} to be 
\begin{equation}
S_{T} := \overline{U \left( (T - T) \setminus \{ {\bf 0} \} \right)} = \overline{\left \{ \frac{\x - \y}{\| \x - \y \|_2}~\big|~  \x, \y \in T, ~\x \neq \y \right \}}.
\label{equ:UnitSecs}
\end{equation}
Note that $S_T$ is always a compact subset of the unit sphere $\mathbb{S}^{N-1} \subset \mathbbm{R}^N$, and that $S_{T} = - S_{T}$.

Herein we will call a matrix $A \in \mathbbm{C}^{m \times N}$ an {\it $\epsilon$-JL \textbf{map} of a set $T \subset \mathbbm{R}^N$ into $\mathbbm{C}^m$} if 
$$(1 - \epsilon) \| {\bf x} \|_2^2 \leq \| A{\bf x}\|_2^2 \leq (1 + \epsilon) \| {\bf x} \|_2^2$$
holds for all ${\bf x} \in T$.  Note that this is equivalent to $A \in \mathbbm{C}^{m \times N}$ having the property that
$$ \sup_{{\bf x} \in T \setminus \{ {\bf 0} \}} \left| \left\| A ({\bf x}/\| {\bf x} \|_2) \right\|^2_2 - 1 \right| = \sup_{{\bf x} \in U(T)} \left| \left\| A {\bf x} \right\|^2_2 - 1 \right| \leq \epsilon,$$
where $U(T) \subset \mathbbm{R}^N$ is the normalized version of $T \setminus \{ \bf 0 \} \subset \mathbbm{R}^N$ defined as above.  Furthermore, we will say that a matrix $A \in \mathbbm{C}^{m \times n}$ is an {\it $\epsilon$-JL \textbf{embedding} of a set $T \subset \mathbbm{R}^n$ into $\mathbbm{C}^m$} if $A$ is an $\epsilon$-JL map of 
$$T - T := \left\{ {\bf x} - {\bf y } ~\big|~ {\bf x},{\bf y} \in T \right\}$$ 
into $\mathbbm{C}^m$.  Here we will be working with random matrices which will embed any fixed set $T$ of bounded size (measured with respect to, e.g., Gaussian Width \cite{vershynin_high-dimensional_2018}) with high probability.   Such matrix distributions are often called {\bf oblivious} and discussed as randomized embeddings in the absence of any specific set $T$ since their embedding quality can be determined independently of any properties of a given set $T$ beyond its size.  In particular, the class of oblivious {\bf sub-Gaussian random matrices} having independent, isotropic, and sub-Gaussian rows will receive special attention below.  

\subsection{Some Common Measures of Set Size and Complexity with Associated Bounds}

We will denote the cardinality of a finite set $T$ by $|T|$.  For a (potentially infinite) set $T \subset \mathbbm{R}^N$ we define its {\bf radius} and {\bf diameter} to be
$${\rm rad}(T) := \sup_{{\bf x} \in T} \| {\bf x} \|_2$$
and
$${\rm diam}(T) := {\rm rad}(T-T) = \sup_{{\bf x}, {\bf y} \in T} \| {\bf x} - {\bf y} \|_2,$$
respectively.  Given a value $\delta \in \mathbbm{R}^+$, a {\bf $\delta$-cover of $T$} (also sometimes called a {\bf $\delta$-net of $T$}) will be a subset $S \subset T$ such that the following holds 
$$\forall {\bf x} \in T ~\exists {\bf y} \in S ~{\rm such~that~} \| {\bf x} - {\bf y} \|_2 \leq \delta.$$ 
The {\bf $\delta$-covering number of $T$}, denoted by $\mathcal{N}(T,\delta) \in \mathbbm{N}$, is then the smallest achievable cardinality of a $\delta$-cover of $T$.
Finally, the {\bf Gaussian width} of a set $T$ is defined as follows.

\begin{Definition}({\bf Gaussian Width} \cite[Definition 7.5.1]{vershynin_high-dimensional_2018}).  
\label{def:GaussianWidth}
The Gaussian width of a set $T \subset \mathbb{R}^N$ is  \begin{align*}
w(T) := \mathbb{E} \sup_{{\bf x} \in T} \, \langle {\bf g},{\bf x} \rangle
\end{align*}
where ${\bf g}$ is a random vector with $N$ independent and identically distributed (i.i.d.) mean $0$ and variance $1$ Gaussian entries.  For a list of useful properties of the Gaussian width we refer the reader to \cite[Proposition 7.5.2]{vershynin_high-dimensional_2018}.
\end{Definition}

Finally, reach is an extrinsic parameter of a subset $S$ of Euclidean space defined based on how far away points can be from $S$ while still having a unique closest point in $S$ \cite{federer_curvature_1959,thale_50_2008}.  The following formal definition of reach utilizes the Euclidean distance $d$ between a given point ${\bf x} \in \mathbbm{R}^N$ and subset $S \subset \mathbbm{R}^N$.
 \begin{Definition}
 \label{def:Reach}
(\textbf{Reach} \cite[Definition 4.1]{federer_curvature_1959}).  For a subset $S \subset \mathbb{R}^N$ of Euclidean space, the reach $\tau_S$ is
$$
    \tau_S := \sup \left\{ t \geq 0  ~\big|~ \, \forall {\bf x} \in \mathbbm{R}^n \text{ such that } d({\bf x},S) < t, \, {\bf x} \text{ has a unique closest point in } S \right\}.
$$
\end{Definition}

The following theorem is a restatement of Theorem 20 in \cite{DBLP:journals/corr/abs-2110-04193}.  It bounds the Gaussian width of a smooth submanifold of $\mathbbm{R}^N$ in terms of its dimension, reach, and volume.

\begin{Theorem}[Gaussian Width of the Unit Secants of a Submanifold of $\mathbbm{R}^N$, Potentially with Boundary] \label{GaussianWidthOfManifodWithBoundaryViaGunther}
Let $\mathcal{M}  \hookrightarrow \mathbbm{R}^N$ be a compact $d$-dimensional submanifold of $\mathbbm{R}^N$ 
with boundary $\partial \mathcal{M}$, finite reach $\tau_{\mathcal{M}}$, and volume  $V_{\mathcal M}$.  Enumerate the connected components of $\partial \mathcal{M}$ and let $\tau_i$ be the reach of the $i^{\rm th}$ connected component of $\partial \mathcal M$ as a submanifold of $\mathbbm{R}^N$. 
Set $\tau := \min_{i} \{\tau_{\mathcal M}, \tau_i \}$, let $V_{\partial \mathcal{M}}$ be the volume of $\partial \mathcal{M}$, and denote the volume of the $d$-dimensional Euclidean ball of radius $1$ by $\omega_d$.  Next, 
\begin{enumerate}
    \item if $d=1$, define $\alpha_{\mathcal M} :=  \frac{20 V_{\mathcal M}}{\tau}  + V_{\partial {\mathcal M}}$, else
    \item if $d \geq 2$, define $\alpha_{\mathcal M} :=
\frac{V_\mathcal{M}}{ \omega_d} \left(\frac{41}{\tau} \right)^d 
+ \frac{V_{\partial \mathcal{M}}}{ \omega_{d-1}} \left(\frac{81}{\tau} \right)^{d-1}$.  
\end{enumerate}
Finally, define 
\begin{align}
\beta_{\mathcal{M}} &:= \left(\alpha_{\mathcal M}^2 +3^{d} \alpha_{\mathcal M} \right). \label{equ:betadef}
\end{align}
Then, the Gaussian width of $\overline{U \left((\mathcal{M}-\mathcal{M}) \setminus \{ {\bf 0} \} \right)}$ satisfies $$w \left(  S_{\mathcal M} \right) = w \left(\overline{U \left((\mathcal{M}-\mathcal{M}) \setminus \{ {\bf 0} \} \right)} \right) \leq 8\sqrt{2}\sqrt{\ln \left(\beta_{\mathcal{M}} \right)+4d}.$$
\end{Theorem}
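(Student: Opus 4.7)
The plan is to bound $w(S_{\mathcal M})$ by decomposing $S_{\mathcal M}$ as a union of on the order of $\beta_{\mathcal M}$ ``low complexity'' pieces on $\mathbb{S}^{N-1}$ and then invoking two standard tools: (i) for any finite collection $T_1,\dots,T_K$ of subsets of the unit ball, one has $w\!\left(\bigcup_{k=1}^K T_k\right) \le \max_k w(T_k) + C\sqrt{\log K}$, and (ii) the Gaussian width of a unit sphere sitting inside a $d$-dimensional affine subspace is on the order of $\sqrt{d}$. Once such a decomposition is in hand, the bound $w(S_{\mathcal M}) \le C\bigl(\sqrt{d}+\sqrt{\log \beta_{\mathcal M}}\bigr) \le 8\sqrt{2}\,\sqrt{\ln\beta_{\mathcal M}+4d}$ falls out after fixing constants.

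First I would construct finite covers of $\mathcal M$ and of $\partial \mathcal M$ at scales proportional to the reach $\tau$. Away from $\partial \mathcal M$, Gunther's volume comparison theorem (valid below the reach) gives $\mathcal N(\mathcal M,\delta) \lesssim V_{\mathcal M}/(\omega_d\,\delta^d)$; near the boundary one additionally covers a tubular collar by cover points drawn from $\partial \mathcal M$ itself, whose cardinality is controlled by $V_{\partial\mathcal M}/(\omega_{d-1}\,\delta^{d-1})$. Choosing $\delta$ to be a specific fraction of $\tau$ produces nets whose cardinalities match the two summands in the definition of $\alpha_{\mathcal M}$; the explicit constants $20$, $41$, and $81$ emerge from the exact fractions taken and from the volume-comparison constant valid below the reach. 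I will write $N_{\mathcal M} \lesssim \alpha_{\mathcal M}$ for the total number of cover points produced.

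Next I would split each unit secant $(\x-\y)/\|\x-\y\|_2$ according to whether $\|\x-\y\|_2$ is larger (``long'') or smaller (``short'') than a threshold of order $\tau$. For the long regime, the normalization map $(\x,\y)\mapsto (\x-\y)/\|\x-\y\|_2$ is Lipschitz in each argument with constant $O(1/\tau)$ on its domain, so rounding $\x$ and $\y$ to their nearest cover points produces a finite set of directions of size at most $N_{\mathcal M}^2 \lesssim \alpha_{\mathcal M}^2$, up to a uniform approximation error controllable by the net radius. For the short regime, the reach hypothesis forces $(\x-\y)/\|\x-\y\|_2$ to lie within a small neighborhood of the unit tangent sphere at the cover point nearest to $\x$ (with a boundary correction provided precisely by the $\partial \mathcal M$-cover introduced above). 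Each such neighborhood is a thickening of a $d$-dimensional unit sphere, and a standard $3^d$-point net on that sphere then resolves each short-regime piece; the factor $3^d$ is exactly what appears in the second summand of $\beta_{\mathcal M}$.

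Combining the two regimes, $S_{\mathcal M}$ is contained in a union of at most $K \le \alpha_{\mathcal M}^2 + 3^d\,\alpha_{\mathcal M} = \beta_{\mathcal M}$ pieces, each of radius at most $1$ and each having Gaussian width at most $C\sqrt{d}$ (the long-regime pieces in fact being essentially singletons). Applying the union bound for Gaussian width and absorbing constants then yields the claimed estimate. The main obstacle I expect is calibrating the single transition scale that separates ``long'' from ``short'' secants: one needs the long-regime approximation error to be dominated by the $\sqrt{\log \beta_{\mathcal M}}$ noise floor, while simultaneously ensuring that the short-regime tangent-sphere approximation uses only the intrinsic reach of the full configuration, which forces the minimum $\tau := \min_i\{\tau_{\mathcal M},\tau_i\}$ over boundary components to appear throughout. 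Tracking the sharp numerical constants through this bookkeeping, so that the final prefactor $8\sqrt{2}$ and the exponents $41$, $81$, $3^d$ come out correctly, will be the most tedious part of the argument.
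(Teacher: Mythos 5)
First, a point of comparison: the paper does not actually prove this statement. Theorem~\ref{GaussianWidthOfManifodWithBoundaryViaGunther} is imported verbatim as a restatement of Theorem 20 of \cite{DBLP:journals/corr/abs-2110-04193}, so there is no internal proof to measure you against. That said, the ingredients you list --- Gunther-type volume comparison to bound covering numbers of $\mathcal M$ and $\partial \mathcal M$ by $V_{\mathcal M}/(\omega_d\,\delta^d)$ and $V_{\partial\mathcal M}/(\omega_{d-1}\,\delta^{d-1})$ at scales tied to $\tau := \min_i\{\tau_{\mathcal M},\tau_i\}$, and a split of the unit secants into a ``long'' regime indexed by pairs of cover points and a ``short'' regime resolved by nets of unit tangent spheres --- are indeed the ones used in that reference, and the two summands of $\beta_{\mathcal M} = \alpha_{\mathcal M}^2 + 3^d\alpha_{\mathcal M}$ arise essentially as you describe.

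The genuine gap is in the passage from your decomposition to the width bound. You cover $S_{\mathcal M}$ at a \emph{single} scale, writing that the long-regime rounding and the short-regime tangent-sphere approximation each incur ``a uniform approximation error controllable by the net radius,'' and you then invoke $w\bigl(\bigcup_{k=1}^K T_k\bigr) \le \max_k w(T_k) + C\sqrt{\log K}$. But that union bound applies to the pieces themselves, not to their neighborhoods: if all you know is $S_{\mathcal M} \subseteq \bigcup_k T_k + \delta\,\overline{B^N_{\ell^2}({\bf 0},1)}$, then monotonicity and Minkowski additivity of the Gaussian width force an extra term $\delta\, w\bigl(\overline{B^N_{\ell^2}({\bf 0},1)}\bigr) \approx \delta\sqrt{N}$, which destroys any ambient-dimension-free conclusion for a fixed $\delta$ of order $\tau$. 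To close this you must either (a) arrange that each piece genuinely \emph{contains} (not merely approximates) its portion of $S_{\mathcal M}$, which the tangent-sphere approximation in the short regime cannot deliver exactly, or (b) run the covering construction at \emph{every} scale $\delta \in (0,1]$ to obtain a bound of the shape $\mathcal N(S_{\mathcal M},\delta) \le \beta_{\mathcal M}\,(c/\delta)^{4d}$ and then integrate via Dudley's entropy inequality. The form of the stated conclusion --- $\ln\beta_{\mathcal M}$ \emph{added to} $4d$ under a single square root with prefactor $8\sqrt 2$ --- is the signature of option (b): the $4d$ is the exponent governing the $\delta$-dependence of the covering numbers after Dudley integration, not a one-scale $\sqrt d$ contribution from tangent spheres. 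As written, your argument yields at best $C\bigl(\sqrt d + \sqrt{\log\beta_{\mathcal M}}\bigr)$ with an uncontrolled constant, and at worst a bound contaminated by $\sqrt N$.
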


With this Gaussian width bound in hand we can now begin the proof of our main result.  The approach will be to combine Theorem~\ref{GaussianWidthOfManifodWithBoundaryViaGunther} above with general theorems concerning the existence of outer bi-Lipschitz extensions of $\epsilon$-JL embeddings of arbitrary subsets of $\mathbbm{R}^N$ into lower-dimensional Euclidean space.  These general existence theorems are proven in the next section.

\section{The Main Bi-Lipschitz Extension Results and Their Proofs}
\label{sec:ExtensionRes}

Our first main technical result guarantees that any given JL map $\Phi$ of a special subset of $\mathbb{S}^{N-1}$ related to $\mathcal{M}$ will not only be a bi-Lipschitz map from $\mathcal{M} \subset \R^N$ into a lower dimensional Euclidean space $\mathbbm{R}^m$, but will also have an outer bi-Lipschitz extension into $\mathbbm{R}^{m + 1}$.  It is useful as a means of extending particular (structured) JL maps $\Phi$ of special interest in the context of, e.g., saving on memory costs \cite{iwen2021lower}.

\begin{Theorem}
\label{thm:GeneralExtension}
Let $\mathcal{M} \subset \R^N$, $\epsilon \in (0, 1)$, and suppose that $\Phi \in \C^{m \times N}$ is an $\left( \frac{\epsilon^2}{2304} \right)$-JL map of $S_{\mathcal{M}} + S_{\mathcal{M}}$
into $\C^m$.  Then, there exists an outer bi-Lipschitz extension of $\Phi:  \mathcal{M} \rightarrow \C^m$, $f: \R^N \rightarrow \C^{m+1}$, with the property that
\begin{equation*}
\left| \left\| f(\x) - f(\y) \right\|^2_2 - \left\| \x - \y \right\|_2^2 \right| \leq \epsilon \left\| \x - \y \right\|_2^2
\end{equation*}
holds for all $\x \in \mathcal{M}$ and $\y \in \R^N$.
\end{Theorem}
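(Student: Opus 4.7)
The plan is to adapt the Narayanan--Nelson terminal-embedding construction \cite{NN2019} from the finite-cardinality setting to the arbitrary subset $\mathcal{M} \subset \R^N$. I would define the extension to have the natural ``outer'' form
\[
f(\y) \;=\; (\Phi\y,\; h(\y)) \;\in\; \C^{m+1}
\]
for a scalar function $h: \R^N \to \C$ to be chosen. Under this form, for any $\x \in \mathcal{M}$ and $\y \in \R^N$,
\[
\|f(\y) - f(\x)\|_2^2 \;=\; \|\Phi(\y - \x)\|_2^2 + |h(\y) - h(\x)|^2,
\]
so writing $\Delta(u) := \|u\|_2^2 - \|\Phi u\|_2^2$ for the pointwise JL defect, the target inequality of the theorem reduces to controlling $|h(\y) - h(\x)|^2 - \Delta(\y - \x)$ uniformly in $\x \in \mathcal{M}$.

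Since $\Delta(\y - \x)$ depends on $\x$, the plan is to select, for each $\y$, a single ``worst-case'' witness $v^*(\y) \in \overline{\mathcal{M}}$ attaining the supremum of $v \mapsto \Delta(\y - v)$ (existence follows from a routine compactness/limiting argument), and to choose $h$ so that $|h(\y) - h(\x)|^2$ equals $\Delta(\y - v^*(\y))$ for $\x \in \mathcal{M}$. The complex degree of freedom in $\C^{m+1}$ is available to absorb the sign when this supremum turns out to be negative. With this choice the signed distortion
\[
\|f(\y) - f(\x)\|_2^2 - \|\y - \x\|_2^2 \;=\; \Delta(\y - v^*(\y)) - \Delta(\y - \x)
\]
is automatically non-negative by maximality of $v^*(\y)$, reducing the problem to a matching upper bound by $\epsilon \|\y - \x\|_2^2$.

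The key analytic step is the polarization identity for the quadratic form $u \mapsto \langle Mu, u\rangle$ with $M := I - \Phi^*\Phi$, which gives
\[
\Delta(\y - v^*(\y)) - \Delta(\y - \x) \;=\; 2\langle M a,\, \y - \x\rangle + \Delta(a), \qquad a := \x - v^*(\y) \in \mathcal{M} - \mathcal{M}.
\]
The diagonal term $\Delta(a)$ is immediately bounded by $(\epsilon^2/2304)\|a\|_2^2$ from the JL hypothesis applied on $S_\mathcal{M}$ itself, which is implied by the hypothesis on $S_\mathcal{M} + S_\mathcal{M}$ (since $2u \in S_\mathcal{M} + S_\mathcal{M}$ for $u \in S_\mathcal{M}$). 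The cross term $\langle M a, \y - \x\rangle$ is where the hypothesis on the full Minkowski sum becomes essential: applied to $\hat u \pm \hat v$ for unit secants $\hat u, \hat v \in S_\mathcal{M}$ (using $S_\mathcal{M} = -S_\mathcal{M}$), the parallelogram identity yields approximate preservation of inner products between secants, so that $|\langle M u, v\rangle| \lesssim (\epsilon^2 / 2304)\|u\|_2\|v\|_2$ whenever $u, v \in \mathcal{M} - \mathcal{M}$. To handle the appearance of the arbitrary (non-secant) vector $\y - \x$, I would decompose $\y - \x = (\y - v^*(\y)) - a$: the $-\langle Ma, a\rangle$ piece reduces to a secant-secant inner product handled by the estimate just described, while the residual $\langle Ma, \y - v^*(\y)\rangle$ is controlled by invoking the maximality of $v^*(\y)$ through a perturbation/optimality argument against competing witnesses in $\mathcal{M}$.

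The main obstacle will be precisely this last step---bounding $\langle Ma,\, \y - v^*(\y)\rangle$ without any a priori control on $\y - v^*(\y)$. In \cite{NN2019} the witness set is finite and optimality is verified by direct case-by-case comparison; here $\mathcal{M}$ is arbitrary, so the argument must instead combine the defining inequality $\Delta(\y - v^*(\y)) \geq \Delta(\y - \x)$ for every competing $\x \in \mathcal{M}$ with the secant inner-product preservation above, carefully exploiting how the competitors constrain $\y - v^*(\y)$ in the direction transverse to $a$. The explicit hypothesis constant $\epsilon^2/2304 = (\epsilon/48)^2$ is calibrated to absorb the multiplicative factors accumulated in these polarization and Cauchy--Schwarz manipulations so that the final bi-Lipschitz distortion comes out as the stated $\epsilon$.
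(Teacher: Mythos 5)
Your ansatz $f(\y) = (\Phi\y,\, h(\y))$ is where the argument breaks down, and it cannot be repaired within that form. Since the last coordinate contributes the non-negative quantity $|h(\y)-h(\x)|^2$ to $\|f(\y)-f(\x)\|_2^2$, you always have $\|f(\y)-f(\x)\|_2^2 \geq \|\Phi(\y-\x)\|_2^2$; but for $\y \notin \mathcal{M}$ the vector $\y-\x$ need not point in a secant direction, and the hypothesis (a JL map of $S_{\mathcal{M}}+S_{\mathcal{M}}$ only) gives no control whatsoever on $\|\Phi(\y-\x)\|_2$. Concretely, take $\mathcal{M}=\{\mathbf{0},\mathbf{e}_1\}\subset\R^N$ and $\Phi = (1, K, 0,\dots,0)\in\C^{1\times N}$ with $K$ huge: this $\Phi$ is a $0$-JL map of $S_{\mathcal{M}}+S_{\mathcal{M}}=\{\mathbf{0},\pm 2\mathbf{e}_1\}$, yet for $\y=\mathbf{e}_2$ one gets $\|f(\y)-f(\mathbf{0})\|_2^2 \geq K^2$ while $\|\y-\mathbf{0}\|_2^2=1$, so the upper bound fails for every choice of $h$. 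Relatedly, your remark that the complex degree of freedom can ``absorb the sign'' of a negative defect is incorrect: $|h(\y)-h(\x)|^2$ is a squared modulus and is non-negative no matter how $h(\y)\in\C$ is chosen. The obstacle you flag at the end (controlling the cross term against the non-secant vector $\y - v^*(\y)$) is therefore not a technical loose end to be tightened but a symptom of the fact that the shape of the extension is wrong.

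The paper's construction avoids this by changing the first $m$ coordinates off of $\mathcal{M}$: for $\y\notin\overline{\mathcal{M}}$ it sets $f(\y)=\bigl(\Phi\y_{\mathcal{M}}+g(\y-\y_{\mathcal{M}}),\ \sqrt{\|\y-\y_{\mathcal{M}}\|_2^2-\|g(\y-\y_{\mathcal{M}})\|_2^2}\bigr)$, where $\y_{\mathcal{M}}$ is a nearest point of $\overline{\mathcal{M}}$ to $\y$ and $g:\R^N\to\C^m$ is a nonlinear map satisfying $\|g(\y-\y_{\mathcal{M}})\|_2\leq\|\y-\y_{\mathcal{M}}\|_2$ together with $\left|\Re\left(\langle g(\y-\y_{\mathcal{M}}),\Phi\x'\rangle\right)-\langle\y-\y_{\mathcal{M}},\x'\rangle\right|\leq 2\epsilon'\|\y-\y_{\mathcal{M}}\|_2\|\x'\|_2$ for all secants $\x'\in\overline{\mathcal{M}}-\overline{\mathcal{M}}$ (Lemma~\ref{lem:Getu'foranyu}). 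With this form, expanding $\|f(\x)-f(\y)\|_2^2$ only ever evaluates $\Phi$ on the secant $\x-\y_{\mathcal{M}}$, which the JL hypothesis does control, and the polarization bookkeeping you sketch then goes through. The existence of $g$ is the real content of the proof: it follows from Von Neumann's minimax theorem applied over a finite cover of $S_{\mathcal{M}}$, using the fact (Theorem~\ref{thm:ConvHullDistorionisfree}) that an $(\epsilon^2/2304)$-JL map of $S_{\mathcal{M}}+S_{\mathcal{M}}$ automatically provides $(\epsilon/24)$-convex hull distortion for $S_{\mathcal{M}}$; this convex-hull step, absent from your outline, is exactly what forces the dual minimax value to be nonpositive and hence makes $g$ exist. (A minor arithmetic point: $2304=4\cdot 24^2$ arises from composing these two lemmas, not from $48^2$.)
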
  

\begin{proof}
See Section \ref{sec:proofofGeneralExtension}.
\end{proof}

Looking at Theorem~\ref{thm:GeneralExtension} we can see that an $\left( \frac{\epsilon^2}{2304} \right)$-JL map of $S_{\mathcal{M}} + S_{\mathcal{M}}$ is required in order to achieve the outer extension $f$ of interest.  This result is sub-optimal in two respects.  First, the constant factor $1/2304$ is certainly not tight and can likely be improved substantially.  More importantly though is the fact that $\epsilon$ is squared in the required map distortion which means that the terminal embedding dimension, $m+1$, will have to scale sub-optimally in $\epsilon$ (see Remark~\ref{rem:suboptimaleps} below for details).  Unfortunately, this is impossible to rectify when extending arbitrary maps $\Phi$ (see, e.g., \cite{MMMR2018}).  For sub-gaussian $\Phi$ an improvement is in fact possible, however, which is the subject of our second main technical result just below.  Using specialized theory for sub-gaussian matrices it demonstrates the existence of terminal JL embeddings for arbitrary subsets of $\R^N$ which achieve an optimal terminal embedding dimension up to constants.

\begin{Theorem}
\label{Thm:OptimalExistence}
Let $\mathcal{M} \subset \R^N$ and $\epsilon \in (0, 1)$.  There exists a map $f: \R^N \rightarrow \C^{m}$ with 
$m \leq c \left( \frac{w(S_{\mathcal M})}{\epsilon} \right)^2$
that satisfies
\begin{equation}
\label{equ:optimaldiscussion}
\left| \left\| f(\x) - f(\y) \right\|^2_2 - \left\| \x - \y \right\|_2^2 \right| \leq \epsilon \left\| \x - \y \right\|_2^2
\end{equation}
for all $\x \in \mathcal{M}$ and $\y \in \R^N$.  Here $c \in \mathbbm{R}^+$ is an absolute constant independent of all other quantities.
\end{Theorem}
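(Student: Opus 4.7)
The plan is to bypass Theorem~\ref{thm:GeneralExtension}, whose requirement of an $(\epsilon^2/2304)$-JL map forces an $\epsilon^{-4}$ scaling in the target dimension, and instead to build the terminal embedding directly from a sub-Gaussian random matrix whose finer inner-product concentration lets us keep the dimension at the optimal $\epsilon^{-2}$ scale. This is the infinite-set analogue of the Narayanan--Nelson strategy in~\cite{NN2019}, with the Gaussian width $w(S_{\mathcal{M}})$ replacing $\sqrt{\log |S|}$ as the complexity measure.

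First I would draw $\Pi \in \mathbb{R}^{m \times N}$ as a suitably normalized sub-Gaussian matrix with independent, isotropic rows and $m \asymp (w(S_{\mathcal{M}})/\epsilon)^2$. A sub-Gaussian matrix deviation inequality such as Theorem~9.1.1 of~\cite{vershynin_high-dimensional_2018}, applied both to $S_{\mathcal{M}}$ and to $S_{\mathcal{M}} \pm S_{\mathcal{M}}$, yields with positive probability a realization of $\Pi$ that enjoys two properties simultaneously: (a) $\Pi$ is an $\epsilon$-JL map on $\mathcal{M}-\mathcal{M}$; and (b) $\Pi$ approximately preserves inner products of pairs of unit secants, in the sense that $|\langle \Pi u, \Pi v\rangle - \langle u,v\rangle| \lesssim \epsilon$ uniformly for $u,v \in S_{\mathcal{M}}$. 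Property~(b) is the additional ingredient that sub-Gaussianity provides beyond a generic JL map, and it is what ultimately avoids the $\epsilon$-squaring in Theorem~\ref{thm:GeneralExtension}.

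With such a $\Pi$ in hand, I would define $f$ by the classical one-extra-coordinate construction. Put $f(x) = (\Pi x, 0)$ for $x \in \mathcal{M}$. For $y \in \mathbb{R}^N \setminus \mathcal{M}$, choose a witness point $p(y) \in \overline{\mathcal{M}}$ that (approximately) maximizes $D(y,x) := \|y-x\|_2^2 - \|\Pi(y-x)\|_2^2$ over $x \in \mathcal{M}$, and set
\[
f(y) \;=\; \bigl(\Pi y,\; \sqrt{D(y,p(y))}\bigr).
\]
By construction $D(y,p(y)) \geq 0$ and $\|f(y) - f(p(y))\|_2^2 = \|y-p(y)\|_2^2$ exactly. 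The single real extra coordinate can be absorbed into $\mathbb{C}^{m}$ by placing it in the imaginary part of one of the existing coordinates, so the codomain stays $\mathbb{C}^{m}$ as the statement demands.

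The remaining work is to verify that $|D(y,x) - D(y,p(y))| \leq \epsilon\|y-x\|_2^2$ for every $x \in \mathcal{M}$ and every $y \in \mathbb{R}^N$. Expanding the two quadratic forms, this estimate reduces to bounds on inner-product distortions of the form $|\langle \Pi u, \Pi v\rangle - \langle u,v\rangle|$, where $u$ and $v$ are scaled differences of points of $\mathcal{M}$, which is exactly the content of property~(b). The main obstacle is precisely this uniformity: the witness $p(y)$ varies with $y$, so mere norm preservation on $\mathcal{M}-\mathcal{M}$ is insufficient, and one must lean on the sub-Gaussian inner-product concentration over all pairs of secants at once. A secondary technical point is the existence of the maximizer $p(y)$ when $\mathcal{M}$ is not closed; this is handled either by working with $\overline{\mathcal{M}}$ (which, by \eqref{equ:UnitSecs}, shares the same unit secant set and hence the same Gaussian width) or by taking an $\epsilon$-approximate maximizer and absorbing the resulting slack into the absolute constant $c$.
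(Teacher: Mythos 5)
Your overall strategy (a sub-Gaussian matrix deviation bound on the unit secants to get the optimal $m \asymp (w(S_{\mathcal M})/\epsilon)^2$, followed by a one-extra-coordinate outer extension) matches the paper's, but the extension you actually build is not the one the argument requires, and it fails. The paper defines $f(\y)$ for $\y \notin \overline{\mathcal{M}}$ as $\bigl(\Phi \y_{\mathcal M} + g(\y - \y_{\mathcal M}),\, \sqrt{\|\y-\y_{\mathcal M}\|_2^2 - \|g(\y-\y_{\mathcal M})\|_2^2}\bigr)$, where $\y_{\mathcal M}$ is a nearest point of $\overline{\mathcal{M}}$ and, crucially, $g$ is a \emph{nonlinear} map produced by Lemma~\ref{lem:Getu'foranyu}: a Von Neumann minimax argument over a net of $S_{\mathcal M}$ shows that the $\epsilon$-convex-hull-distortion property of $\Phi$ guarantees, for every $\y$, a vector $g(\y-\y_{\mathcal M})$ of norm at most $\|\y - \y_{\mathcal M}\|_2$ satisfying $\Re\langle g(\y-\y_{\mathcal M}), \Phi\x\rangle \approx \langle \y - \y_{\mathcal M}, \x\rangle$ uniformly over secants $\x$. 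You instead keep $\Pi\y$ itself as the first $m$ coordinates and push all the correction into the last coordinate $\sqrt{D(\y,p(\y))}$. Expanding $D(\y,\x') - D(\y,\x)$ produces the cross term $\langle \Pi(\y-\x), \Pi(\x-\x')\rangle - \langle \y-\x, \x-\x'\rangle$, in which $\y - \x$ is an \emph{arbitrary} vector of $\R^N$, not a secant; your property (b) controls inner products only for pairs of unit secants and says nothing about this term. No such control is possible: a linear $\Pi$ with $m<N$ necessarily distorts directions outside the secant cone, which is exactly why the nonlinear $g$ is needed.

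Concretely: let $\mathcal{M} = \{(0,0),(1,0)\}\subset\R^2$ and $\Pi = (1,\delta)\in\R^{1\times 2}$, which preserves all secant norms and inner products exactly, so your properties (a) and (b) hold with zero error. For $\y=(0,t)$ one computes $D(\y,(0,0)) = t^2(1-\delta^2)$ and $D(\y,(1,0)) = t^2(1-\delta^2)+2\delta t$, so $p(\y)=(1,0)$ and $\|f(\y)-f((0,0))\|_2^2 = \delta^2t^2 + t^2(1-\delta^2)+2\delta t = t^2 + 2\delta t$, a relative error of $2\delta/t$ that exceeds any $\epsilon<1$ once $t \le 2\delta$. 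Your assertion that $D(\y,p(\y))\ge 0$ ``by construction'' is also unjustified, and false for general $\Pi$ and $\y$, since $\|\Pi \z\|_2$ can exceed $\|\z\|_2$ for $\z$ off the secant directions. To repair the argument you must import the machinery of Lemmas~\ref{lem:Getu'foranyu} and~\ref{lem:ExtensionlemCHDimpliesGetExtended} (or the equivalent from \cite{MMMR2018,NN2019}): base the embedding at the nearest point $\y_{\mathcal M}$ and replace $\Pi\y$ by $\Pi\y_{\mathcal M}+g(\y-\y_{\mathcal M})$ with $g$ obtained from the convex-hull-distortion property via the minimax theorem; your Step 1 (Corollary~\ref{coro:subgaussiandistorion}) then supplies exactly the hypothesis that lemma needs at the optimal dimension.
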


\begin{proof}
See Section \ref{sec:proofofOptimalExistence}.
\end{proof}

To see the optimality of the terminal embedding dimension $m$ provided by Theorem~\ref{Thm:OptimalExistence} we note that  functions $f$ which satisfy \eqref{equ:optimaldiscussion} for all $\x, \y \in \mathcal{M}$ must in fact generally scale quadratically in both $w(S_{\mathcal M})$ and $1/\epsilon$ (see \cite[Theorem 7]{iwen2021lowerben} and \cite{larsen2016optimality}).  We will now begin proving supporting results for both of the main technical theorems above.  The first supporting results pertain to the so-called {\bf convex hull distortion} of a given linear $\epsilon$-JL map.

\subsection{All Linear $\epsilon$-JL Maps Provide $\mathcal{O}(\sqrt{\epsilon})$-Convex Hull Distortion}

A crucial component involved in proving our main results involves the approximate norm preservation of all points in the convex hull of a given set bounded set $S \subset \R^N$.  Recall that the {\bf convex hull of $S\subset \C^N$} is
	\[
	\textrm{conv}(S) := \bigcup_{j=1}^{\infty} \left\{\sum_{\ell=1}^{j} \alpha_\ell {\bf x}_\ell ~\big\vert~ \x_1,\dots,\x_j \in S,\,\alpha_1,\dots,\alpha_j \in [0,1] ~\text{s.t.}~ \sum_{ \ell = 1 }^{j} \alpha_\ell = 1\right\}.
	\]
The next theorem states that each point in the convex hull of $S \subset \R^N$ can be expressed as a convex combination of at most $N+1$ points from $S$.  Hence, the convex hulls of subsets of $\R^N$ are actually a bit simpler than they first appear.
\begin{Theorem}[Carath\'eadory, see, e.g., \cite{BARANY1982141}] 
	\label{caratheadory}
	Given $S\in \R^N$, $\forall {\x}\in \text{conv}(S)$, $\exists {\y}_1,\dots,{\y}_{\tilde{N}}$, $\tilde{N} = \min(|S|, N+1)$, such that ${\bf x} = \sum_{\ell=1}^{\tilde{N}} \alpha_\ell {\y}_\ell $ for some $\alpha_1,\dots,\alpha_{\tilde{N}} \in [0,1],\, \sum_{\ell=1}^{\tilde{N}} \alpha_\ell = 1$. 
\end{Theorem}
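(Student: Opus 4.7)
The plan is to prove Carath\'eodory's theorem by an iterative reduction argument. I would start from an arbitrary convex representation $\x = \sum_{\ell=1}^{k} \alpha_\ell \y_\ell$ with $\y_\ell \in S$, nonnegative weights summing to one, and $k$ potentially very large. The goal is to show that whenever $k > N+1$ one can produce a new convex representation of $\x$ in terms of strictly fewer points of $S$, and then iterate until at most $N+1$ points remain. Since the starting $k$ can of course be chosen at most $|S|$, the final bound $\tilde N = \min(|S|, N+1)$ follows immediately.

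The key reduction step is linear-algebraic. Assume $k > N+1$. Form the $k-1$ difference vectors $\y_\ell - \y_1$ for $\ell = 2,\dots,k$; since $k-1 > N$, these are linearly dependent in $\R^N$, so there exist scalars $\beta_2,\dots,\beta_k$, not all zero, with $\sum_{\ell=2}^{k} \beta_\ell (\y_\ell - \y_1) = \mathbf{0}$. Setting $\beta_1 := -\sum_{\ell=2}^{k}\beta_\ell$, I would obtain real numbers $\beta_1,\dots,\beta_k$, not all zero, satisfying both $\sum_{\ell=1}^{k}\beta_\ell = 0$ and $\sum_{\ell=1}^{k}\beta_\ell \y_\ell = \mathbf{0}$.

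Because the $\beta_\ell$ sum to zero and are not identically zero, at least one is strictly positive. I would then choose $t := \min\{\alpha_\ell/\beta_\ell : \beta_\ell > 0\}$ and define $\alpha'_\ell := \alpha_\ell - t\beta_\ell$. A short verification shows that $\alpha'_\ell \geq 0$ for all $\ell$ (by the choice of $t$), that $\sum_\ell \alpha'_\ell = 1 - t\cdot 0 = 1$, and that $\sum_\ell \alpha'_\ell \y_\ell = \x - t\cdot \mathbf{0} = \x$. Moreover the index achieving the minimum in the definition of $t$ yields $\alpha'_\ell = 0$, so the new representation uses at least one fewer point of $S$ than before. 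Iterating this step terminates after finitely many rounds with a representation supported on at most $\min(|S|, N+1)$ points, as claimed.

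I do not anticipate a real obstacle here; the only subtlety to handle carefully is ensuring that each reduction step preserves the convex-combination property, which is exactly why both $\sum_\ell \beta_\ell = 0$ (keeping the weights summing to one) and the specific choice $t = \min_{\beta_\ell > 0}\alpha_\ell/\beta_\ell$ (keeping nonnegativity while zeroing out at least one coordinate) are essential. No topological hypotheses on $S$ are needed, and the argument uses nothing beyond elementary linear algebra and the pigeonhole-style observation that $k-1 > N$ forces linear dependence in $\R^N$.
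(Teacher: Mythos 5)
Your proof is correct: it is the classical affine-dependence reduction argument for Carath\'eodory's theorem, and every step (the construction of the $\beta_\ell$ with $\sum_\ell \beta_\ell = 0$ and $\sum_\ell \beta_\ell \y_\ell = \mathbf{0}$, the choice $t = \min_{\beta_\ell > 0} \alpha_\ell/\beta_\ell$, and the verification that the new weights remain a convex combination while zeroing at least one coordinate) checks out. The paper itself offers no proof of this statement --- it is quoted as a known result with a citation --- so there is nothing to compare against; your argument is the standard one found in the literature and is complete.
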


Finally, we say that a matrix $\Phi \in \C^{m \times N}$ provides {\bf $\epsilon$-convex hull distortion for $S \subset \mathbbm{R}^N$} if 
\[ 
\left| \| \Phi \x \|_2 - \| \x \|_2 \right| \leq \epsilon
\]
holds for all $\x \in \textrm{conv}(S)$.  The main result of this subsection states that all linear $\epsilon$-JL maps can provide $\epsilon$-convex hull distortion for the unit secants of any given set.  In particular, we have the following theorem which generalizes arguments in \cite{MMMR2018} for finite sets to arbitrary and potentially infinite sets.

\begin{Theorem}
\label{thm:ConvHullDistorionisfree}
Let $\mathcal{M} \subset \R^N$, $\epsilon \in (0, 1)$, and suppose that $\Phi \in \C^{m \times N}$ is an $\left( \frac{\epsilon^2}{4} \right)$-JL map of $S_{\mathcal{M}} + S_{\mathcal{M}}$
into $\C^m$.  Then, $\Phi$ will also provide $\epsilon$-convex hull distortion for $S_{\mathcal{M}}$.
\end{Theorem}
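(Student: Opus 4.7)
The plan is to combine Carath\'eodory's theorem (Theorem~\ref{caratheadory}) with a polarization argument. Given any $\x \in \text{conv}(S_{\mathcal M})$, Carath\'eodory lets me write $\x = \sum_{\ell=1}^{\tilde N} \alpha_\ell \y_\ell$ with $\y_\ell \in S_{\mathcal M}$, $\alpha_\ell \in [0,1]$, and $\sum_\ell \alpha_\ell = 1$. Controlling $|\|\Phi\x\|_2 - \|\x\|_2|$ then reduces to controlling $\|\Phi\x\|_2^2 - \|\x\|_2^2$, which expands into a weighted sum of inner-product distortions $\mathrm{Re}\langle \Phi\y_i, \Phi\y_j\rangle - \langle \y_i, \y_j\rangle$. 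The real polarization identity expresses each such distortion in terms of the norm-distortions of $\y_i \pm \y_j$, and the crucial observation is that $S_{\mathcal M} = -S_{\mathcal M}$ guarantees both $\y_i + \y_j$ and $\y_i - \y_j = \y_i + (-\y_j)$ lie in $S_{\mathcal M} + S_{\mathcal M}$, the set on which $\Phi$ is assumed to be an $(\epsilon^2/4)$-JL map.

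Carrying this out, I would expand
\[
\|\Phi \x\|_2^2 = \sum_{i,j} \alpha_i \alpha_j \langle \Phi \y_i, \Phi \y_j\rangle = \sum_{i,j} \alpha_i \alpha_j \,\mathrm{Re}\langle \Phi \y_i, \Phi \y_j\rangle,
\]
where the second equality uses that the sum is real-valued (hence equal to its own complex conjugate, so the imaginary parts cancel pairwise under $i \leftrightarrow j$). Subtracting the analogous expansion of $\|\x\|_2^2$ and invoking the polarization identity
\[
4\bigl(\mathrm{Re}\langle \Phi \y_i, \Phi \y_j\rangle - \langle \y_i, \y_j\rangle\bigr) = \bigl(\|\Phi(\y_i + \y_j)\|_2^2 - \|\y_i + \y_j\|_2^2\bigr) - \bigl(\|\Phi(\y_i - \y_j)\|_2^2 - \|\y_i - \y_j\|_2^2\bigr),
\]
I bound each bracket via the JL hypothesis applied to $\y_i \pm \y_j \in S_{\mathcal M}+S_{\mathcal M}$. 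Since $\|\y_i \pm \y_j\|_2 \leq 2$, each bracket is at most $(\epsilon^2/4)\cdot 4 = \epsilon^2$ in absolute value, which yields $|\mathrm{Re}\langle \Phi \y_i, \Phi \y_j\rangle - \langle \y_i, \y_j\rangle| \leq \epsilon^2/2$ for every pair. Combining with $\sum_{i,j}\alpha_i\alpha_j = 1$ and the triangle inequality gives $|\|\Phi \x\|_2^2 - \|\x\|_2^2| \leq \epsilon^2/2$.

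Finally, to convert this squared-norm bound into the unsquared bound required by the definition of $\epsilon$-convex hull distortion, I use the elementary fact that for nonnegative reals $a,b$ one has $(a-b)^2 \leq |a^2-b^2|$, so
\[
|\|\Phi \x\|_2 - \|\x\|_2| \leq \sqrt{|\|\Phi \x\|_2^2 - \|\x\|_2^2|} \leq \epsilon/\sqrt{2} \leq \epsilon,
\]
completing the argument. The main technical subtlety is the treatment of the complex inner product, which forces working with $\mathrm{Re}\langle\cdot,\cdot\rangle$ throughout and verifying that both the sums $\y_i + \y_j$ and differences $\y_i - \y_j$ lie in $S_{\mathcal M}+S_{\mathcal M}$ (the second one only because $S_{\mathcal M}$ is symmetric about the origin). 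Once that is in hand the rest is essentially a bookkeeping extension of the finite-set argument in \cite{MMMR2018}: nowhere does the finiteness of the indexing set $\{\y_\ell\}$ get used beyond the fact that Carath\'eodory ensures the decomposition has at most $N+1$ terms.
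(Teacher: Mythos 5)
Your proof is correct and follows essentially the same route as the paper's, which factors the Carath\'eodory-plus-polarization step through Lemmas~\ref{polarizationforreal} and~\ref{JLforconv} to arrive at the identical bound $\left|\|\Phi\x\|_2^2-\|\x\|_2^2\right|\le\epsilon^2/2$ on $\textrm{conv}(S_{\mathcal M})$. The only difference is in the last step: where you pass to the unsquared bound via the elementary inequality $(a-b)^2\le|a^2-b^2|$ for $a,b\ge 0$, the paper instead splits into the cases $\|\x\|_2\ge\epsilon/2$ and $\|\x\|_2<\epsilon/2$; your version is slightly cleaner and gives the marginally better uniform constant $\epsilon/\sqrt{2}$.
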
  

The proof of Theorem~\ref{thm:ConvHullDistorionisfree} depends on two intermediate lemmas.  The first lemma is a slight modification of Lemma 3 in \cite{iwen2021lower}.

\begin{Lemma}
	\label{polarizationforreal}
Let $S\subset \R^N$ and $\epsilon\in(0,1)$. Then, an $\epsilon$-JL map $\Phi \in \C^{m\times N}$ of the set 
\[S^{\prime} = \left\{\frac{{\x}}{\|{\x}\|_2} + \frac{{\y}}{\|{\y}\|_2}, \frac{{\x}}{\|{\x}\|_2} - \frac{{\y}}{\|{\y}\|_2} ~\big\vert~ {\x},{\y} \in S\right\}
\]
will satisfy 
\[
|\Re \left(\langle \Phi{\x} ,\Phi{\y} \rangle\right) - \langle{\x},{\y}\rangle| \leq 2\epsilon \|{\x}\|_2 \|{\y}\|_2 
\]
$\forall {\x},{\y}\in S$.
\end{Lemma}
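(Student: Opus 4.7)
The plan is to use the standard polarization identity to reduce the bound on $|\Re\langle \Phi x, \Phi y\rangle - \langle x, y\rangle|$ to two applications of the JL property. After handling the degenerate case (the claim is trivial when $x=0$ or $y=0$), I normalize: write $\hat{x} := x / \|x\|_2$ and $\hat{y} := y/\|y\|_2$ so that by bilinearity it suffices to prove
\[
\left| \Re\langle \Phi \hat{x}, \Phi \hat{y}\rangle - \langle \hat{x}, \hat{y}\rangle \right| \leq 2\epsilon,
\]
and then multiply through by $\|x\|_2 \|y\|_2$.

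For the normalized statement I invoke the polarization identity in the form
\[
4\,\Re\langle u, v\rangle = \|u+v\|_2^2 - \|u-v\|_2^2,
\]
applied once on the ambient side with $u=\hat{x}, v=\hat{y}$ and once on the embedded side with $u=\Phi\hat{x}, v=\Phi\hat{y}$. Using the linearity of $\Phi$, this rewrites the quantity to be bounded as
\[
4\left| \Re\langle \Phi\hat{x}, \Phi\hat{y}\rangle - \langle \hat{x}, \hat{y}\rangle \right| = \left| \bigl(\|\Phi(\hat{x}+\hat{y})\|_2^2 - \|\hat{x}+\hat{y}\|_2^2\bigr) - \bigl(\|\Phi(\hat{x}-\hat{y})\|_2^2 - \|\hat{x}-\hat{y}\|_2^2\bigr) \right|.
\]
Now $\hat{x}+\hat{y}$ and $\hat{x}-\hat{y}$ are precisely the vectors in $S'$, so $\Phi$'s $\epsilon$-JL property applies to each. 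Since $\|\hat{x}\pm\hat{y}\|_2^2 \leq 4$, each of the two parenthesized terms is bounded in absolute value by $\epsilon\|\hat{x}\pm\hat{y}\|_2^2 \leq 4\epsilon$. The triangle inequality then yields the total bound $8\epsilon$ on the right-hand side, i.e. $|\Re\langle \Phi\hat{x}, \Phi\hat{y}\rangle - \langle \hat{x}, \hat{y}\rangle| \leq 2\epsilon$, and rescaling gives the claim.

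There is no real obstacle here, this is essentially bookkeeping: the only points requiring care are that the JL property is being applied to $\hat{x}\pm\hat{y}$ rather than to $\hat{x}$ and $\hat{y}$ separately (which is exactly why the hypothesis is formulated on the larger set $S'$), and that $\Phi$ has complex-valued output so the polarization identity must be paired with the real part of the complex inner product. Both are handled by the identity $4\,\Re\langle \Phi u, \Phi v\rangle = \|\Phi u + \Phi v\|_2^2 - \|\Phi u - \Phi v\|_2^2$, which holds for any $\Phi \in \C^{m \times N}$ and real vectors $u, v$ by linearity and the definition of the complex Euclidean norm.
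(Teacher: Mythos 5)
Your proof is correct and follows essentially the same route as the paper's: reduce to unit vectors, apply the real-part polarization identity on both the ambient and embedded sides, and invoke the $\epsilon$-JL property on the vectors $\hat{x}\pm\hat{y}\in S'$. The only cosmetic difference is in the final constant bookkeeping (you bound each polarization term separately by $4\epsilon$, while the paper sums $\|u+v\|_2^2+\|u-v\|_2^2$ before bounding), and both yield the same $2\epsilon$.
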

\begin{proof}
If ${\x} = {\bf 0}$ or ${\y} = {\bf 0}$ the inequality holds trivially. Thus, suppose $\x,\y \neq 0$. Consider the normalizations $\u = \frac{\x}{\|\x\|_2}, \v = \frac{\y}{\|\y\|_2}$.  The polarization identities for complex/real inner products imply that
		\begin{align*}
		\left| \Re \left(\langle\Phi{\u},\Phi{\v}\right) \rangle - \langle{\u},{\v} \rangle \right| &= \frac{1}{4} \left| \Re \left( \sum_{\ell = 0}^{3} i^\ell \left\|\Phi{\u}+i^{\ell}\Phi{\v} \right\|_2^2 \right) - \left( \|{\u}+{\v}\|_2^2  - \|{\u}-{\v}\|_2^2 \right) \right|\\
		&= \frac{1}{4} \left| \left( \left\|\Phi{\u}+\Phi{\v} \right\|_2^2 - \left\|\Phi{\u}-\Phi{\v} \right\|_2^2 \right) - \left( \|{\u}+{\v}\|_2^2  - \|{\u}-{\v}\|_2^2 \right) \right|\\
		&\leq \frac{1}{4} \left( \left| \left\|\Phi{\u}+\Phi{\v} \right\|_2^2 - \|{\u}+{\v}\|_2^2 \right| + \left| \left\|\Phi{\u}-\Phi{\v} \right\|_2^2   - \|{\u}-{\v}\|_2^2 \right| \right)\\
		&\leq \frac{\epsilon}{4} \left( \|{\u}+{\v}\|_2^2 +  \|{\u}-{\v}\|_2^2  \right)
		\leq  \frac{\epsilon}{2}\left( \|{\u}\|_2 + \|{\v}\|_2\right)^2
		\leq 2\epsilon.
		\end{align*}
		The result now follows by multiplying the inequality through by $\| \x \|_2 \| \y \|_2$.
\end{proof}

Next, we see that and linear $\epsilon$-JL maps are capable of preserving the angles between the elements of the convex hull of any bounded subset $S \subset \R^N$.

\begin{Lemma}
	\label{JLforconv}
	Suppose $S \subset \overline{B^N_{\ell^2}({\bf 0},\gamma)}$ and $\epsilon \in (0,1)$. Let $\Phi \in \C^{m\times N}$ be an $\left(\frac{\epsilon}{2\gamma^2}\right)$-JL map of the set $S^{\prime}$ defined as in Lemma \ref{polarizationforreal} into $\C^m$.  Then 
	
	\begin{equation*}
		\left| \Re \left(\langle\Phi {\x},\Phi {\y}\rangle \right) - \langle{\x},{\y}\rangle \right| \leq \epsilon
	\end{equation*}
	holds $\forall {\x},{\y} \in \textrm{conv}(S)$.
\end{Lemma}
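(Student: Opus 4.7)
The plan is to first apply Lemma \ref{polarizationforreal} pointwise on $S$, and then bootstrap to $\operatorname{conv}(S)$ by writing convex combinations via Carath\'eodory's theorem and exploiting the bilinearity of $\Re(\langle \Phi\,\cdot\,,\Phi\,\cdot\,\rangle) - \langle \,\cdot\,,\,\cdot\,\rangle$.

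First I would use Lemma \ref{polarizationforreal} with distortion parameter $\epsilon' = \frac{\epsilon}{2\gamma^2}$ in place of the $\epsilon$ of that lemma. Since $\Phi$ is an $\epsilon'$-JL map of $S'$ by hypothesis, the conclusion of Lemma \ref{polarizationforreal} yields
\[
\left| \Re\!\left( \langle \Phi \x, \Phi \y \rangle \right) - \langle \x, \y \rangle \right| \leq 2 \epsilon' \|\x\|_2 \|\y\|_2 = \frac{\epsilon}{\gamma^2} \|\x\|_2 \|\y\|_2
\]
for all $\x, \y \in S$. Because $S \subset \overline{B^N_{\ell^2}(\zero,\gamma)}$, both $\|\x\|_2$ and $\|\y\|_2$ are bounded by $\gamma$, so the right-hand side is at most $\epsilon$. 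This establishes the desired bound on the base set $S$ itself.

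Next I would lift this to $\operatorname{conv}(S)$. Fix $\x, \y \in \operatorname{conv}(S)$. By Theorem~\ref{caratheadory}, we can write $\x = \sum_{\ell=1}^{\tilde N} \alpha_\ell \u_\ell$ and $\y = \sum_{k=1}^{\tilde N} \beta_k \v_k$ with $\u_\ell, \v_k \in S$, nonnegative coefficients, and $\sum_\ell \alpha_\ell = \sum_k \beta_k = 1$. Expanding both $\Re(\langle \Phi \x, \Phi \y\rangle)$ and $\langle \x, \y \rangle$ by bilinearity and subtracting yields
\[
\Re\!\left( \langle \Phi \x, \Phi \y \rangle \right) - \langle \x, \y \rangle = \sum_{\ell, k} \alpha_\ell \beta_k \Bigl( \Re\!\left(\langle \Phi \u_\ell, \Phi \v_k \rangle\right) - \langle \u_\ell, \v_k \rangle \Bigr).
\]
Applying the triangle inequality together with the pointwise estimate on $S$ from the previous step gives
\[
\left| \Re\!\left( \langle \Phi \x, \Phi \y \rangle \right) - \langle \x, \y \rangle \right| \leq \sum_{\ell,k} \alpha_\ell \beta_k \cdot \epsilon = \epsilon,
\]
which is the claim.

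The proof is essentially routine once the right decomposition is in place, so I do not anticipate a genuine obstacle. The only subtle point is a bookkeeping one: Lemma \ref{polarizationforreal} takes an $\epsilon$-JL map of the \emph{normalized} sum/difference set $S'$ and produces a bound scaling like $\|\x\|_2 \|\y\|_2$, so the $\gamma^2$ in the denominator of the hypothesized JL parameter is precisely what is needed to absorb the factor $\|\x\|_2\|\y\|_2 \le \gamma^2$ and still leave a clean $\epsilon$ on the right-hand side. After that, convexity of the bilinear error and Carath\'eodory's theorem do all the remaining work for free.
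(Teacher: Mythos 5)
Your proposal is correct and follows essentially the same route as the paper's proof: apply Lemma~\ref{polarizationforreal} with the rescaled distortion parameter to get the pointwise bound $\frac{\epsilon}{\gamma^2}\|\x\|_2\|\y\|_2 \leq \epsilon$ on $S$, then expand over Carath\'eodory decompositions by bilinearity and sum the convex weights. The only cosmetic difference is that you state the pointwise estimate on $S$ as a separate step, whereas the paper folds it directly into the double sum.
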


\begin{proof}
	Let ${\x},{\y}\in \text{conv}(S)$. By Theorem \ref{caratheadory}, $\exists  \left\{{\y}_i\right\}_{i=1}^{\tilde{N}},\, \left\{{\x}_i\right\}_{i=1}^{\tilde{N}} \subset S \,{\rm and} \, \left\{\alpha_\ell\right\}_{\ell=1}^{\tilde{N}}, \left\{\beta_{\ell}\right\}_{\ell=1}^{\tilde{N}} \subset[0,1]$ with $\sum_{\ell = 1}^{\tilde{N}} \alpha_\ell = \sum_{\ell = 1}^{\tilde{N}} \beta_\ell = 1$ such that
	\[
	{\x} = \sum_{\ell=1}^{\tilde{N}} \alpha_{\ell}{\x}_\ell,\, {\rm and} \, \,	{\y} = \sum_{\ell=1}^{\tilde{N}} \beta_{\ell}{\y}_\ell.
	\]
Hence, by Lemma~\ref{polarizationforreal} we have that
	\begin{align*}
	\left| \Re \left( \langle\Phi{\x},\Phi{\y}\rangle \right) - \langle{\x},{\y}\rangle \right| &= \left| \sum_{\ell=1}^{\tilde{N}} \sum_{j=1}^{\tilde{N}} \alpha_{\ell}\beta_{j} \left( \Re \left(\langle\Phi{\x_{\ell}},\Phi{\y_{j}}\rangle \right) - \langle{\x_{\ell}},{\y_{j}}\rangle \right) \right| \\
	&\leq 2 \sum_{\ell=1}^{\tilde{N}} \sum_{j=1}^{\tilde{N}} \alpha_{\ell}\beta_{j} \left(\frac{\epsilon}{2\gamma^2}\right) \|\x_{\ell}\|_2\|\y_{j}\|_2 \\
	&\leq \epsilon \left(\sum_{\ell=1}^{\tilde{N}}\alpha_{\ell}\right)\left( \sum_{j=1}^{\tilde{N}}\beta_{j}\right) ~=~ \epsilon.
	\end{align*}
	Here we have also used the mapping error $ \left(\frac{\epsilon}{2\gamma^2}\right) $ and the fact that all norms of vectors in this case will be less than $\gamma$.
\end{proof}

We are now prepared to prove Theorem~\ref{thm:ConvHullDistorionisfree}.

\subsubsection{Proof of Theorem~\ref{thm:ConvHullDistorionisfree}}
Applying Lemma~\ref{JLforconv} with $S = S_{\mathcal{M}} = S_{\mathcal{M}} \cup -S_{\mathcal{M}}$, we note that $S' = S_{\mathcal{M}} + S_{\mathcal{M}} = (S_{\mathcal{M}} \cup -S_{\mathcal{M}}) + (S_{\mathcal{M}} \cup -S_{\mathcal{M}})$ since $S \subset \mathbb{S}^{N-1}$.  Furthermore, $\gamma = 1$ in this case.  Hence, $\Phi \in \R^{m \times N}$ being an $\left( \frac{\epsilon^2}{4} \right)$-JL map of $S_{\mathcal{M}} + S_{\mathcal{M}}$ into $\R^m$ implies that 	
\begin{equation}
		\label{JLforconvineq}
		\left| \Re \left(\langle\Phi {\x},\Phi {\y}\rangle \right) - \langle{\x},{\y}\rangle \right| \leq \frac{\epsilon^2}{2}
	\end{equation}
	holds 
	$\forall {\x},{\y} \in \textrm{conv}(S_{\mathcal M}) \subset \overline{B^N_{\ell^2}({\bf 0},1)}$.  In particular, \eqref{JLforconvineq} with $\x = \y$ implies that 
	$$\left| \| \Phi {\x} \|_2 - \| \x \|_2 \right| \left| \| \Phi {\x} \|_2 + \| \x \|_2 \right| ~=~ \left| \| \Phi {\x} \|_2^2 - \| \x \|_2^2 \right| \leq \epsilon^2/2.$$
Noting that $\left| \| \Phi {\x} \|_2 + \| \x \|_2 \right| \geq \| \x \|_2$ we can see that the desired result holds automatically if $\| \x \|_2 \geq \epsilon/2$.  Thus, it suffices to assume that that $\| \x \|_2 < \epsilon/2$, but then we are also finished since $\left| \| \Phi {\x} \|_2 - \| \x \|_2 \right |\leq \max \{ \| \x \|_2, \| \Phi \x \|_2 \} \leq  \sqrt{\| \x \|_2^2 + \epsilon^2/2} < \frac{\sqrt{3}}{2} \epsilon$ will hold in that case.

\begin{remark}
\label{rem:suboptimaleps}
Though Theorem~\ref{thm:ConvHullDistorionisfree} holds for arbitrary linear maps, we note that it has suboptimal dependence on the distortion parameter $\epsilon$.  In particular, a linear $\left( \frac{\epsilon^2}{4} \right)$-JL map of an arbitrary set will generally embed that set into $\C^m$ with $m = \Omega(1/\epsilon^4)$ \cite{larsen2016optimality}.  However, it has been shown in \cite{NN2019} that sub-Gaussian matrices will behave better with high probability, allowing for outer bi-Lipschitz extensions of JL-embeddings of finite sets into $\R^m$ with $m = \mathcal{O}(1/\epsilon^2)$.  In the next subsection we generalize those better scaling results for sub-Gaussian random matrices to (potentially) infinite sets.
\end{remark}

\subsection{Sub-Gaussian Matrices and $\epsilon$-Convex Hull Distortion for Infinite Sets}

Motivated by results in \cite{NN2019} for finite sets which achieve optimal dependence on the distortion parameter $\epsilon$ for sub-Gaussian matrices, in this section we will do the same for infinite sets using results from \cite{vershynin_high-dimensional_2018}.  Our main tool will be the following result (see also \cite[Theorem 4]{DBLP:journals/corr/abs-2110-04193}).

\begin{Theorem}[See Theorem 9.1.1 and Exercise 9.1.8 in \cite{vershynin_high-dimensional_2018}]
\label{vershynin-matrix-deviation-theorem}
Let $\Phi$ be $m \times N$ matrix whose rows are independent, isotropic, and sub-Gaussian random vectors in $\mathbbm{R}^N$. Let $p \in (0,1)$ and $S \subset \mathbb{R}^N$. 
Then there exists a constant $c$ depending only on the distribution of the rows of $\Phi$ such that
\begin{align*}
\sup_{{\bf x} \in S} \left| \| \Phi {\bf x} \|_2 - \sqrt{m} \| {\bf x} \|_2   \right| 
\leq 
c \left[w(S) + \sqrt{\ln(2/p)} \cdot {\rm rad}(S) \right]
\end{align*}
holds with probability at least $1 - p$.
\end{Theorem}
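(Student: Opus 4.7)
The plan is to view $X_{\bf x} := \|\Phi {\bf x}\|_2 - \sqrt{m}\|{\bf x}\|_2$ as a stochastic process indexed by $S$, verify that it has sub-Gaussian increments with respect to the Euclidean metric on $\mathbbm{R}^N$, and then invoke Talagrand's comparison inequality to bound its supremum in terms of the Gaussian width of $S$ plus a sub-Gaussian tail correction. This is the strategy laid out in Chapter~9 of \cite{vershynin_high-dimensional_2018}.

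The first and technically hardest step is the sub-Gaussian increment estimate $\|X_{\bf x} - X_{\bf y}\|_{\psi_2} \leq C \|{\bf x} - {\bf y}\|_2$, where $C$ depends only on the sub-Gaussian norm of the rows of $\Phi$. I would fix ${\bf z} \in \mathbbm{R}^N$ and analyze $\|\Phi {\bf z}\|_2^2 = \sum_{i=1}^m \langle \phi_i, {\bf z}\rangle^2$, whose mean equals $m\|{\bf z}\|_2^2$ by isotropy. Each $\langle \phi_i, {\bf z}\rangle$ is sub-Gaussian with parameter $O(\|{\bf z}\|_2)$, so each $\langle \phi_i, {\bf z}\rangle^2$ is sub-exponential with parameter $O(\|{\bf z}\|_2^2)$, and Bernstein's inequality applied to the centered sum $\|\Phi {\bf z}\|_2^2 - m\|{\bf z}\|_2^2$ produces sharp concentration. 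A standard square-root linearization argument (using $|\sqrt{a}-\sqrt{b}| \leq \sqrt{|a-b|}$ in the small-deviation regime and $|a-b|/(\sqrt{a}+\sqrt{b})$ in the large-deviation regime) upgrades this to the pointwise sub-Gaussian bound $\left\| \|\Phi {\bf z}\|_2 - \sqrt{m}\|{\bf z}\|_2 \right\|_{\psi_2} \leq C\|{\bf z}\|_2$. Applying this with ${\bf z} = {\bf x} - {\bf y}$, and combining it with the triangle-inequality bound $|\|\Phi {\bf x}\|_2 - \|\Phi {\bf y}\|_2| \leq \|\Phi({\bf x}-{\bf y})\|_2$, delivers the desired increment control on $(X_{\bf x})$.

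With sub-Gaussian increments in hand, I would apply Talagrand's comparison theorem (e.g.\ Theorem~8.5.5 of \cite{vershynin_high-dimensional_2018}) to the process $(X_{\bf x})_{{\bf x} \in S}$. This yields both the expected supremum bound $\mathbb{E}\sup_{{\bf x} \in S} X_{\bf x} \leq C \, w(S)$ (after anchoring at an arbitrary base point) and the sub-Gaussian deviation inequality $\sup_{{\bf x} \in S} |X_{\bf x}| \leq C\bigl( w(S) + u \cdot \mathrm{diam}(S) \bigr)$ with probability at least $1 - 2\exp(-u^2)$. Choosing $u = \sqrt{\ln(2/p)}$ and absorbing $\mathrm{diam}(S)$ into $\mathrm{rad}(S)$ (or anchoring at ${\bf 0}$, where $X_{\bf 0} = 0$) yields the stated inequality. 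The principal obstacle throughout is the sub-Gaussian increment lemma: passing from quadratic-form concentration of $\|\Phi {\bf z}\|_2^2$ to sub-Gaussian increments of $\|\Phi {\bf z}\|_2 - \sqrt{m}\|{\bf z}\|_2$ requires a careful regime split because square-rooting contracts or expands deviations depending on the size of $\|{\bf z}\|_2$, and the bound must be uniform in that size. Everything downstream of this lemma is a mechanical application of generic chaining.
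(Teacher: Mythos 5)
Your outline is exactly the argument behind the cited result: the paper offers no proof of its own, simply invoking Theorem 9.1.1 and Exercise 9.1.8 of \cite{vershynin_high-dimensional_2018}, and Vershynin's proof there is precisely the sub-Gaussian increment lemma for $X_{\bf x} = \|\Phi{\bf x}\|_2 - \sqrt{m}\|{\bf x}\|_2$ (via Bernstein for the quadratic form plus the square-root regime split) followed by Talagrand's comparison inequality in its high-probability form. Your reconstruction, including the identification of the increment bound as the hard step and the choice $u=\sqrt{\ln(2/p)}$, matches that route.
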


The main result of this section is a simple consequence of Theorem~\ref{vershynin-matrix-deviation-theorem} together with standard results concerning Gaussian widths \cite[Proposition 7.5.2]{vershynin_high-dimensional_2018}.

\begin{Corollary}
\label{coro:subgaussiandistorion}
Let $\mathcal{M} \subset \R^N$, $\epsilon,p \in (0, 1)$, and $\Phi \in \R^{m \times N}$ be an $m \times N$ matrix whose rows are independent, isotropic, and sub-Gaussian random vectors in $\mathbbm{R}^N$.  Furthermore, suppose that $$m \geq \frac{c'}{\epsilon^2} \left( w \left(S_{\mathcal{M}} \right) + \sqrt{\ln(2/p)} \right)^2,$$
where $c'$ is a constant depending only on the distribution of the rows of $\Phi$. Then, with probability at least $1 - p$ the random matrix $\frac{1}{\sqrt{m}} \Phi$ will simultaneously be both an $\epsilon$-JL embedding of $\mathcal{M}$ into $\R^m$ and also provide $\epsilon$-convex hull distortion for $S_{\mathcal{M}}$.
\end{Corollary}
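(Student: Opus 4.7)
The plan is to apply Theorem~\ref{vershynin-matrix-deviation-theorem} a single time to the set $T := \textrm{conv}(S_{\mathcal{M}})$ and then read off both conclusions. The key observations are: (i) since $S_{\mathcal{M}} \subset \mathbb{S}^{N-1}$, its convex hull is contained in $\overline{B^N_{\ell^2}(\mathbf{0},1)}$, so $\mathrm{rad}(T) \leq 1$; and (ii) by a standard property of Gaussian width (see \cite[Proposition 7.5.2]{vershynin_high-dimensional_2018}), $w(T) = w(\textrm{conv}(S_{\mathcal{M}})) = w(S_{\mathcal{M}})$. Thus Theorem~\ref{vershynin-matrix-deviation-theorem} gives, with probability at least $1-p$,
\begin{equation*}
\sup_{{\bf x} \in \textrm{conv}(S_{\mathcal{M}})} \left| \|\Phi {\bf x}\|_2 - \sqrt{m}\, \|{\bf x}\|_2 \right| \leq c \left[ w(S_{\mathcal{M}}) + \sqrt{\ln(2/p)} \right].
\end{equation*}

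Dividing through by $\sqrt{m}$ and choosing $c' = (3c)^2$ so that the hypothesis $m \geq \frac{c'}{\epsilon^2}(w(S_{\mathcal{M}}) + \sqrt{\ln(2/p)})^2$ forces the right-hand side above (after dividing by $\sqrt m$) to be at most $\epsilon/3$, I obtain
\begin{equation*}
\sup_{{\bf x} \in \textrm{conv}(S_{\mathcal{M}})} \left| \tfrac{1}{\sqrt{m}} \|\Phi {\bf x}\|_2 - \|{\bf x}\|_2 \right| \leq \epsilon/3.
\end{equation*}
This is exactly the definition of $\epsilon/3$-convex hull distortion for $S_{\mathcal{M}}$, which in particular gives $\epsilon$-convex hull distortion as claimed.

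For the $\epsilon$-JL embedding statement, restricting the above inequality to ${\bf x} \in S_{\mathcal{M}}$ (where $\|{\bf x}\|_2 = 1$) and using the polarization identity $\lvert a^2 - 1\rvert = \lvert a-1\rvert \cdot \lvert a+1 \rvert \leq (\epsilon/3)(2+\epsilon/3) \leq \epsilon$ for $a = \tfrac{1}{\sqrt m}\|\Phi {\bf x}\|_2$ yields $\lvert \tfrac{1}{m}\|\Phi {\bf x}\|_2^2 - 1 \rvert \leq \epsilon$ uniformly on $S_{\mathcal{M}}$. Rescaling this to arbitrary secants ${\bf x} - {\bf y}$ with ${\bf x},{\bf y}\in\mathcal{M}$ (by dividing through by $\|{\bf x}-{\bf y}\|_2$ when nonzero, and noting the statement is trivial otherwise) shows that $\tfrac{1}{\sqrt m}\Phi$ is an $\epsilon$-JL map of $\mathcal{M} - \mathcal{M}$, i.e., an $\epsilon$-JL embedding of $\mathcal{M}$.

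There is no genuine obstacle: the only substantive ingredient beyond Theorem~\ref{vershynin-matrix-deviation-theorem} is the identity $w(\textrm{conv}(S_{\mathcal{M}})) = w(S_{\mathcal{M}})$, which is standard. The one mildly delicate point is organizing constants so that the single event of probability $\geq 1-p$ simultaneously certifies both conclusions; the cleanest way, as above, is to absorb the factor of $3$ used in the squaring step directly into the constant $c'$ rather than applying Vershynin's inequality twice with a union bound.
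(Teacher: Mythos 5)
Your proof is correct and follows essentially the same route as the paper: a single application of Theorem~\ref{vershynin-matrix-deviation-theorem} to $\textrm{conv}(S_{\mathcal{M}})$, using $w(\textrm{conv}(S_{\mathcal{M}})) = w(S_{\mathcal{M}})$ and $\mathrm{rad} = 1$, followed by absorbing the constant-factor loss from converting norm distortion to squared-norm distortion into $c'$. (Minor quibble: the step $\lvert a^2-1\rvert = \lvert a-1\rvert\,\lvert a+1\rvert$ is just a difference of squares, not a polarization identity, but the estimate is fine.)
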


\begin{proof}
We apply Theorem~\ref{vershynin-matrix-deviation-theorem} to $S = \textrm{conv}\left(S_{\mathcal{M}} \right)$.  In doing so we note that $w \left(\textrm{conv}\left(S_{\mathcal{M}}\right) \right) = w \left(S_{\mathcal{M}} \right)$ \cite[Proposition 7.5.2]{vershynin_high-dimensional_2018}, and that ${\rm rad}\left(\textrm{conv}\left(S_{\mathcal{M}}\right) \right) = 1$ since $\textrm{conv}\left(S_{\mathcal{M}} \right) \subseteq \overline{B^N_{\ell^2}({\bf 0},1)}$.  The result will be that $\frac{1}{\sqrt{m}} \Phi$ provides $\epsilon$-convex hull distortion for $S_{\mathcal{M}}$ as long as $c' \geq c^2$.  Next, we note that providing $\epsilon$-convex hull distortion for $S_{\mathcal{M}}$ implies that $\frac{1}{\sqrt{m}} \Phi$ will also approximately preserve the $\ell_2$-norms of all the unit vectors in $S_{\mathcal{M}} \subset \textrm{conv}\left(S_{\mathcal{M}}\right)$.  In particular, $\frac{1}{\sqrt{m}} \Phi$ will be a $3 \epsilon$-JL map of $S_{\mathcal{M}}$ into $\R^m$, which in turn implies that $\frac{1}{\sqrt{m}} \Phi$ will also be a $3 \epsilon$-JL embedding of $\mathcal{M} - \mathcal{M}$ into $\R^m$ by linearity/rescaling.  Adjusting the constant $c'$ to account for the additional factor of $3$ now yields the stated result.
\end{proof}

We are now prepared to prove our general theorems regarding outer bi-Lipschitz extensions of JL-embeddings of potentially infinite sets.

\subsection{Outer Bi-Lipschitz Extension Results for JL-embeddings of General Sets}

Before we can prove our final results for general sets we will need two supporting lemmas.  They are adapted from the proofs of analogous results in \cite{MMMR2018,NN2019} for finite sets.

\begin{Lemma}
\label{lem:Getu'foranyu}
Let $\mathcal{M} \subset \R^N$, $\epsilon \in (0,1)$, and suppose that $\Phi \in \C^{m \times N}$ provides $\epsilon$-convex hull distortion for $S_{\mathcal{M}}$.  Then, there exists a function $g: \R^N \rightarrow \C^m$ such that 
\begin{equation}
\label{equ:gprop}
    \left| \Re \left( \langle g(\y), \Phi \x \rangle \right) - \langle \y, \x \rangle \right| \leq 2\epsilon \| \y \|_2 \| \x \|_2
\end{equation}
holds for all $\x \in \overline{\mathcal{M}} - \overline{\mathcal{M}}$ and $\y \in \R^N$.
\end{Lemma}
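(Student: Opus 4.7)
The plan is to construct, for each fixed $\y \in \R^N$, a near-minimizer $g(\y) \in \C^m$ of the bilinear minimax problem
\begin{align*}
V(\y) \;:=\; \inf_{z \in \C^m} \sup_{\hat{\x} \in S_{\mathcal{M}}} \left| \Re \bigl(\langle z, \Phi \hat{\x} \rangle\bigr) - \langle \y, \hat{\x} \rangle \right|,
\end{align*}
and to show $V(\y) \leq \epsilon \|\y\|_2$. Once such a $g$ is in hand, \eqref{equ:gprop} follows by rescaling: for any $\x \in \overline{\mathcal{M}} - \overline{\mathcal{M}}$ with $\x \neq {\bf 0}$, the unit vector $\x / \|\x\|_2$ lies in $S_{\mathcal{M}}$ (which is already closed by definition), and the quantity to be bounded is homogeneous of degree one in $\x$, so the $\|\x\|_2$ factor on the right of \eqref{equ:gprop} appears automatically; the case $\x = {\bf 0}$ is trivial.

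To estimate $V(\y)$, I would first exploit the symmetry $S_{\mathcal{M}} = -S_{\mathcal{M}}$ to drop the absolute value, and then use linearity of the integrand in $\hat{\x}$ to enlarge the supremum from $S_{\mathcal{M}}$ to $\textrm{conv}(S_{\mathcal{M}}) \subseteq \overline{B^N_{\ell^2}({\bf 0},1)}$, which is compact and convex. Next I would restrict $z$ to the closed Euclidean ball $B_\tau \subset \C^m$ of radius $\tau := \|\y\|_2$, making the minimization domain compact and convex as well. Since the objective $(z,\x) \mapsto \Re(\langle z, \Phi\x\rangle) - \langle \y, \x \rangle$ is bilinear (hence jointly continuous and concave--convex), Sion's minimax theorem applies on $B_\tau \times \textrm{conv}(S_\mathcal{M})$ and yields
\begin{align*}
\inf_{z \in B_\tau}\, \sup_{\x \in \textrm{conv}(S_{\mathcal{M}})} \bigl[\, \Re(\langle z, \Phi\x\rangle) - \langle \y, \x \rangle \,\bigr] \;=\; \sup_{\x \in \textrm{conv}(S_{\mathcal{M}})} \bigl[\, {-}\tau\|\Phi\x\|_2 - \langle \y, \x \rangle \,\bigr],
\end{align*}
where the inner infimum has been evaluated in closed form (attained at $z = -\tau \Phi\x / \|\Phi\x\|_2$, or $z = {\bf 0}$ when $\Phi\x = {\bf 0}$).

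The $\epsilon$-convex hull distortion hypothesis supplies $\|\Phi\x\|_2 \geq \|\x\|_2 - \epsilon$ on $\textrm{conv}(S_{\mathcal{M}})$, and Cauchy--Schwarz gives $-\langle\y,\x\rangle \leq \|\y\|_2 \|\x\|_2 = \tau\|\x\|_2$. Adding these bounds yields $-\tau\|\Phi\x\|_2 - \langle\y,\x\rangle \leq \tau\epsilon$, so the right-hand side of the Sion equality is at most $\tau\epsilon = \epsilon\|\y\|_2$. The left-hand side is a convex continuous function of $z$ on the compact set $B_\tau$, hence its infimum is attained; choosing $g(\y)$ to be any such minimizer delivers the promised bound $V(\y) \leq \epsilon\|\y\|_2$, which is in fact stronger than the $2\epsilon$ slack appearing in \eqref{equ:gprop}.

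The main subtlety, and the only genuine obstacle, is the need to restrict $z$ to a bounded set before invoking Sion's theorem. Without such a restriction the inner infimum degenerates to $-\infty$ whenever $\Phi\x \neq {\bf 0}$, and while the minimax swap is still formally valid in the extended reals, the existence of an actual attaining minimizer $z$ becomes less transparent. The radius $\tau = \|\y\|_2$ is chosen precisely so that the $-\tau\|\Phi\x\|_2$ penalty cancels the Cauchy--Schwarz contribution $\tau\|\x\|_2$ and leaves only the convex hull distortion slack $\tau\epsilon$; everything else reduces to routine applications of Sion's theorem, Cauchy--Schwarz, and rescaling.
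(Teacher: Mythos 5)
Your proposal is correct, and it reaches the same destination by a noticeably cleaner route. The heart of both arguments is identical: a minimax swap followed by the closed-form inner minimizer $z=-\|\y\|_2\,\Phi\x/\|\Phi\x\|_2$, at which point the $\epsilon$-convex hull distortion bound $\|\Phi\x\|_2\geq\|\x\|_2-\epsilon$ combines with Cauchy--Schwarz to kill the objective. The difference is in the execution. The paper first passes to a finite $\left(\epsilon/2\max\{1,\|\Phi\|_{2\to2}\}\right)$-cover $\mathcal{C}$ of $S_{\mathcal M}$, parametrizes convex combinations by the $\ell^1$-ball $\overline{B^{|\mathcal{C}|}_{\ell^1}(\mathbf{0},1)}$, applies Von Neumann's minimax theorem to the resulting finite-dimensional saddle problem, and then pays twice: a perturbation step to transfer the bound from $\mathcal{C}$ back to all of $S_{\mathcal M}$ (which is where the factor $2$ in \eqref{equ:gprop} comes from), and a separate limiting argument to upgrade from $\mathcal{M}-\mathcal{M}$ to $\overline{\mathcal{M}}-\overline{\mathcal{M}}$. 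You instead run Sion's minimax theorem directly on $\overline{B^{2m}_{\ell^2}(\mathbf{0},\|\y\|_2)}\times\mathrm{conv}(S_{\mathcal M})$ --- both compact and convex in finite dimensions, with a bilinear objective, so the hypotheses are comfortably met --- which eliminates the cover, the perturbation step, and the factor of $2$ (you get $\epsilon\|\y\|_2\|\x\|_2$ rather than $2\epsilon\|\y\|_2\|\x\|_2$), and the closure issue evaporates because $S_{\mathcal M}$ is closed by definition and contains the unit secants of $\overline{\mathcal M}$. Your restriction of $z$ to the ball of radius $\|\y\|_2$ also automatically yields $\|g(\y)\|_2\leq\|\y\|_2$, which is exactly the auxiliary property the paper needs later in Lemma~\ref{lem:ExtensionlemCHDimpliesGetExtended} to make the extra coordinate $\sqrt{\|\y-\y_{\mathcal M}\|_2^2-\|g(\y-\y_{\mathcal M})\|_2^2}$ well defined, so nothing downstream is lost. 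The only thing the paper's discretized formulation buys is a finite-dimensional optimization problem that can literally be implemented (cf.\ Line 2 of Algorithm~\ref{alg:FTE}), whereas your $\sup$ over $\mathrm{conv}(S_{\mathcal M})$ is purely existential; as a proof of the lemma as stated, yours is complete and slightly sharper.
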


\begin{proof}
First, we note that \eqref{equ:gprop} holds trivially for $\y = {\bf 0}$ as long as $g({\bf 0}) = {\bf 0}$.  Thus, it suffices to consider nonzero $\y$. Second, we claim that it suffices to prove the existence of a function $g: \R^N \rightarrow \C^m$ that satisfies both of the following properties
\begin{enumerate}
    \item $\| g(\y) \|_2 \leq \| \y \|_2$, and 
    \item $\left| \Re \left( \langle g(\y), \Phi \x' \rangle \right) - \langle \y, \x' \rangle \right| \leq \epsilon \| \y \|_2$  for all $\x'$ in a finite $\left(\epsilon / 2 \max \{ 1,\| \Phi \|_{2 \rightarrow 2} \} \right)$-cover $\mathcal{C}$ of $S_{\mathcal M}$,
\end{enumerate}
for all $\y \in \R^N$.  To see why, fix $\y \neq {\bf 0}$, $\x \in S_{\mathcal M}$, and let $\x' \in \mathcal{C} \subset S_{\mathcal M}$ satisfy $\| \x - \x' \|_2 \leq \epsilon / 2 \max \{ 1,\| \Phi \|_{2 \rightarrow 2} \}$.  We can see that any function $g$ satisfying both of the properties above will have
\begin{align*}
    \left| \Re \left( \langle g(\y), \Phi \x \rangle \right) - \langle \y, \x \rangle \right| &= \left| \Re \left( \langle g(\y), \Phi \x' \rangle \right) + \Re \left( \langle g(\y), \Phi \left( \x - \x' \right) \rangle \right) - \langle \y, (\x - \x') \rangle - \langle \y, \x' \rangle \right|\\
    &\leq  \left| \Re \left( \langle g(\y), \Phi \x' \rangle \right) - \langle \y, \x' \rangle \right| + \left|\langle g(\y), \Phi \left( \x - \x' \right) \rangle \right|+ \left| \langle \y, (\x - \x') \rangle  \right|\\
    &\leq \epsilon \| \y \|_2 + \| g(\y) \|_2 \| \Phi \|_{2 \rightarrow 2} \| \x - \x' \|_2 + \| \y \|_2 \| \| \x - \x' \|_2
\end{align*}
where the second property was used in the last inequality above.  

Appealing to the first property above we can now also see that $\left| \Re \left( \langle g(\y), \Phi \x \rangle \right) - \langle \y, \x \rangle \right| \leq 2\epsilon \| \y \|_2$ will hold.  
Finally, as a consequence of the definition of $S_{\mathcal{M}}$, we therefore have that \eqref{equ:gprop} will hold for all $\x \in \mathcal{M} - \mathcal {M}$ and $\y \in \R^N$ whenever Properties $1$ and $2$ hold above.  Showing that \eqref{equ:gprop} holds all $\x \in \overline{\mathcal{M}} - \overline{\mathcal {M}}$ more generally can be proven by contradiction using a limiting argument combined with the fact that both the right and left hand sides of \eqref{equ:gprop} are continuous in $\x$ for fixed $\y$.  Hence, we have reduced the proof to constructing a function $g$ that satisfies both Properties $1$ and $2$ above.

Let 
\begin{align}
 \label{equ:defg}
    g(\y) &:= {\arg\min}_{\v \in \overline{B^{2m}_{\ell^2}({\bf 0},\| \y \|_2)}} ~{\max}_{{\boldsymbol \lambda} \in \overline{B^{|\mathcal{C}|}_{\ell^1} ({\bf 0},1)}} ~h_{\y}(\v,{\boldsymbol \lambda}), ~{\rm where}\\
    \label{equ:defh}
    h_{\y}(\v,{\boldsymbol \lambda}) &:= \sum_{\u \in \mathcal{C}} \left( \lambda_{\u} \left( \langle \y, \u \rangle - \Re \left( \langle \v, \Phi \u \rangle \right)\right) - \epsilon \left| \lambda_{\u} \right| \cdot \| \y \|_2 \right)
\end{align}
where we identify $\C^m$ with $\R^{2m}$ above.
Note that Property $1$ above is guaranteed by definition \eqref{equ:defg}.  Furthermore, we note that if $$\max_{{\boldsymbol \lambda} \in \{ \pm \mathbf{e}_j \}^{|\mathcal{C}|}_{j = 1} } h_{\y}(g(\y),{\boldsymbol \lambda}) ~=~ \max_{\u \in \mathcal{C}} \left( \left| \langle \y, \u \rangle - \Re \left( \langle g(\y), \Phi \u \rangle \right)\right| - \epsilon \| \y \|_2 \right) ~\leq~ {\max}_{{\boldsymbol \lambda} \in \overline{B^{|\mathcal{C}|}_{\ell^1} ({\bf 0},1)}} ~h_{\y}(g(\y),{\boldsymbol \lambda}) ~\leq~ 0$$
then Property $2$ above will hold as well.  Thus, it suffices to show that $\min_{\v \in \overline{B^{2m}_{\ell^2}({\bf 0},\| \y \|_2)}} ~{\max}_{{\boldsymbol \lambda} \in \overline{B^{|\mathcal{C}|}_{\ell^1} ({\bf 0},1)}} ~h_{\y}(\v,{\boldsymbol \lambda}) \leq 0$ always holds in order to finish the proof.  

Noting that $h_{\y}:  \R^{2m + |\mathcal{C}|} \mapsto \R$ defined in \eqref{equ:defh} is continuous, convex (affine) in $\v$, concave in $\boldsymbol \lambda$, and further noting that both $\overline{B^{|\mathcal{C}|}_{\ell^1} ({\bf 0},1)}$ and $\overline{B^{2m}_{\ell^2}({\bf 0},\| \y \|_2)}$ are compact and convex, we may apply Von Neumann's minimax theorem \cite{neumann1928theorie} to see that
$${\min}_{\v \in \overline{B^{2m}_{\ell^2}({\bf 0},\| \y \|_2)}} ~{\max}_{{\boldsymbol \lambda} \in \overline{B^{|\mathcal{C}|}_{\ell^1} ({\bf 0},1)}} ~h_{\y}(\v,{\boldsymbol \lambda}) = {\max}_{{\boldsymbol \lambda} \in \overline{B^{|\mathcal{C}|}_{\ell^1} ({\bf 0},1)}}~{\min}_{\v \in \overline{B^{2m}_{\ell^2}({\bf 0},\| \y \|_2)}} ~ h_{\y}(\v,{\boldsymbol \lambda})$$
holds.  Thus, we will in fact be finished if we can show that $\min_{\v \in \overline{B^{2m}_{\ell^2}({\bf 0},\| \y \|_2)}} ~ h_{\y}(\v,{\boldsymbol \lambda}) \leq 0$ holds for each ${\boldsymbol \lambda} \in \overline{B^{|\mathcal{C}|}_{\ell^1} ({\bf 0},1)}$.  By rescaling this in turn is implied by showing that $\forall \u \in \textrm{conv}(\mathcal{C} \cup -\mathcal{C})$ $\exists \v \in \overline{B^{2m}_{\ell^2}({\bf 0},\| \y \|_2)}$ such that 
\begin{equation}
\label{equ:finalthing}
    \left( \langle \y, \u \rangle - \Re \left( \langle \v, \Phi \u \rangle \right) - \epsilon \| \y \|_2 \right) \leq 0
\end{equation}
holds.  

To prove \eqref{equ:finalthing} for a fixed $\u \in \textrm{conv}(\mathcal{C} \cup -\mathcal{C} ) \subseteq \textrm{conv}(S_{\mathcal{M}} \cup -S_{\mathcal{M}}) = \textrm{conv}(S_{\mathcal{M}})$ and thereby establish the stated theorem, one may set $\v = \| \y \|_2 \frac{\Phi \u}{\| \Phi \u \|_2}.$  Doing so we see that the left side of \eqref{equ:finalthing} simplifies to $\langle \y, \u \rangle - \| \y \|_2 \| \Phi \u \|_2 - \epsilon \| \y \|_2$.  To finish, we note that indeed
\begin{align*}
    \langle \y, \u \rangle - \| \y \|_2 \| \Phi \u \|_2 - \epsilon \| \y \|_2 &\leq \| \y \|_2 \| \u \|_2 - \| \y \|_2 \| \Phi \u \|_2 - \epsilon \| \y \|_2\\
    &\leq \| \y \|_2 \left(  \| \u \|_2 - \| \Phi \u \|_2 - \epsilon \right) \leq 0
\end{align*}
will then hold since $\Phi$ provides $\epsilon$-convex hull distortion for $S_{\mathcal{M}}$.
\end{proof}

\begin{Lemma}
\label{lem:ExtensionlemCHDimpliesGetExtended}
Let $\mathcal{M} \subset \R^N$ be non-empty, $\epsilon \in (0,1)$, and suppose that $\Phi \in \C^{m \times N}$ provides $\epsilon$-convex hull distortion for $S_{\mathcal{M}}$.  Then, there exists an outer bi-Lipschitz extension of $\Phi$, $f: \R^N \rightarrow \C^{m+1}$, with the property that
\begin{equation}
\label{equ:outextension}
\left| \left\| f(\x) - f(\y) \right\|^2_2 - \left\| \x - \y \right\|_2^2 \right| \leq 24 \epsilon \left\| \x - \y \right\|_2^2
\end{equation}
holds for all $\x \in \mathcal{M}$ and $\y \in \R^N$.
\end{Lemma}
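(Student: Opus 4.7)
The plan is to define $f$ piecewise via the Narayanan--Nelson style outer-extension recipe. Since $\overline{\mathcal{M}}$ is a closed non-empty subset of $\R^N$, every $\y\in\R^N$ admits a (possibly non-unique) nearest point in $\overline{\mathcal{M}}$; fix any choice $p(\y)\in\overline{\mathcal{M}}$ with $\|\y-p(\y)\|_2=\inf_{\u\in\overline{\mathcal{M}}}\|\y-\u\|_2$. Let $g:\R^N\to\C^m$ be the function supplied by Lemma~\ref{lem:Getu'foranyu}; I take the particular $g$ produced by the minimax construction in that proof, which additionally satisfies $\|g(\z)\|_2\leq\|\z\|_2$ for every $\z\in\R^N$ (Property~1 in that proof). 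This guarantees that the ``slack'' coordinate introduced below is a well-defined non-negative real number.

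I then set $f:\R^N\to\C^{m+1}$ to be
\[
f(\y):=\begin{cases}(\Phi\y,\,0) & \y\in\overline{\mathcal{M}},\\ \bigl(\Phi p(\y)+g(\y-p(\y)),\;\sqrt{\|\y-p(\y)\|_2^2-\|g(\y-p(\y))\|_2^2}\bigr) & \y\notin\overline{\mathcal{M}},\end{cases}
\]
which restricts to the outer extension $\y\mapsto(\Phi\y,0)$ on $\mathcal{M}$. Observe that $\epsilon$-convex hull distortion for $S_{\mathcal{M}}$ gives $|\|\Phi\u\|_2-1|\leq\epsilon$ for every $\u\in S_{\mathcal{M}}$, which after homogenizing and squaring upgrades to $|\|\Phi\u\|_2^2-\|\u\|_2^2|\leq 3\epsilon\|\u\|_2^2$ for all $\u\in\overline{\mathcal{M}}-\overline{\mathcal{M}}$.

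The verification splits into two cases. If $\y\in\overline{\mathcal{M}}$ then $\|f(\x)-f(\y)\|_2^2=\|\Phi(\x-\y)\|_2^2$, and since $\x-\y\in\overline{\mathcal{M}}-\overline{\mathcal{M}}$ the JL bound above yields a discrepancy of at most $3\epsilon\|\x-\y\|_2^2\leq 24\epsilon\|\x-\y\|_2^2$. The non-trivial case is $\y\notin\overline{\mathcal{M}}$. Writing $\w:=\x-p(\y)\in\overline{\mathcal{M}}-\overline{\mathcal{M}}$ and $\z:=\y-p(\y)$, one expands and cancels the $\|g(\z)\|_2^2$ contributions to obtain
\[
\|f(\x)-f(\y)\|_2^2 \;=\; \|\Phi\w\|_2^2 \,-\, 2\,\Re\langle\Phi\w,\,g(\z)\rangle \,+\, \|\z\|_2^2,
\]
which should be compared with $\|\x-\y\|_2^2=\|\w\|_2^2-2\langle\w,\z\rangle+\|\z\|_2^2$. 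The JL bound handles $|\|\Phi\w\|_2^2-\|\w\|_2^2|\leq 3\epsilon\|\w\|_2^2$, and Lemma~\ref{lem:Getu'foranyu} applied with its ``$\x$'' equal to $\w$ and its ``$\y$'' equal to $\z$ handles the cross term with error at most $4\epsilon\|\w\|_2\|\z\|_2$, so the total discrepancy is at most $3\epsilon\|\w\|_2^2+4\epsilon\|\w\|_2\|\z\|_2$.

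To close the argument, the nearest-point property gives $\|\z\|_2\leq\|\x-\y\|_2$ (because $\x\in\overline{\mathcal{M}}$), and the triangle inequality then forces $\|\w\|_2\leq\|\x-\y\|_2+\|\z\|_2\leq 2\|\x-\y\|_2$. Substituting bounds the discrepancy by $(12+8)\epsilon\|\x-\y\|_2^2=20\epsilon\|\x-\y\|_2^2\leq 24\epsilon\|\x-\y\|_2^2$, which is the desired estimate. The main technical content has already been packaged into Lemma~\ref{lem:Getu'foranyu}; the only real design obstacle here is choosing the Pythagorean last coordinate so that the expansion of $\|f(\x)-f(\y)\|_2^2$ cancels the $\|g(\z)\|_2^2$ terms and produces exactly the cross-term geometry for which $g$ was built.
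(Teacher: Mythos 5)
Your proof is correct and follows essentially the same route as the paper's: the same piecewise definition of $f$ with the Pythagorean last coordinate, the same expansion via the nearest point $p(\y)=\y_{\mathcal{M}}$, the $3\epsilon$-JL bound on $\overline{\mathcal{M}}-\overline{\mathcal{M}}$ inherited from convex hull distortion, Lemma~\ref{lem:Getu'foranyu} for the cross term, and the same nearest-point/triangle-inequality bounds at the end. Your direct estimate $3\epsilon\|\w\|_2^2+4\epsilon\|\w\|_2\|\z\|_2\leq 20\epsilon\|\x-\y\|_2^2$ is in fact slightly cleaner than the paper's intermediate AM--GM step and comfortably lands within the stated $24\epsilon$.
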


\begin{proof}
Given $\y \in \R^N$ let $\y_{\mathcal{M}} \in \overline{\mathcal{M}}$ satisfy $\left\| \y - \y_{\mathcal{M}} \right\|_2 = \inf_{\x \in \overline{\mathcal{M}}} \left\| \y - \x \right\|_2$.\footnote{One can see that it suffices to approximately compute $\y_{\mathcal M}$ in order to achieve \eqref{equ:outextension} up to a fixed precision.}  We define 
	\[
	f(\y) := \begin{cases}
	\left( \Phi \y, 0 \right) & \text{if } \y \in \overline{\mathcal{M}} \\
	\left( \Phi \y_{\mathcal{M}} + g(\y - \y_{\mathcal{M}}), \sqrt{\left\| \y - \y_{\mathcal{M}} \right\|_2^2 - \left\| g\left(\y - \y_{\mathcal{M}}\right) \right\|_2^2} \right) & \text{if } \y \notin \overline{\mathcal{M}} 
	\end{cases}
 	\]
where $g$ is defined as in Lemma~\ref{lem:Getu'foranyu}.  Fix $\x \in \mathcal{M}$. If $\y \in \overline{\mathcal{M}}$ then $\| f(\x) - f(\y) \|_2^2 = \| \Phi(\x - \y) \|_2^2$, and so we can see that $\left| \| f(\x) - f(\y) \|_2^2 - \| \x - \y \|_2^2 \right| \leq 3 \epsilon \| \x - \y \|_2^2$ will hold since $\Phi$ will be $3 \epsilon$-JL embedding of $\overline{\mathcal{M}} - \overline{\mathcal{M}}$ (recall the proof of Corollary~\ref{coro:subgaussiandistorion} and note the linearity of $\Phi$).  Thus, it suffices to consider a fixed $\y \notin \overline{\mathcal{M}}$.  In that case we have
\begin{align}
    \| f(\x) - f(\y) \|_2^2 &= \| \Phi(\x - \y_{\mathcal{M}}) - g\left(\y - \y_{\mathcal{M}}\right)\|_2^2 + \left\| \y - \y_{\mathcal{M}} \right\|_2^2 - \left\| g\left(\y - \y_{\mathcal{M}}\right) \right\|_2^2 \nonumber \\
    &= \left\| \y - \y_{\mathcal{M}} \right\|_2^2 + \| \Phi(\x - \y_{\mathcal{M}}) \|_2^2 - 2 \Re \left( \langle g\left(\y - \y_{\mathcal{M}}\right), \Phi(\x - \y_{\mathcal{M}}) \rangle \right) 
    \label{equ:finalextendeqproof1}
\end{align}
by the polarization identity and parallelogram law.  

Similarly we have that  
\begin{equation}
\label{equ:finalextendeqproof2}
    \| \x - \y \|_2^2 = \left\| \left(\x - \y_{\mathcal{M}} \right) - \left(\y - \y_{\mathcal{M}} \right) \right\|_2^2 =\| \y - \y_{\mathcal{M}} \|_2^2 + \| \x - \y_{\mathcal{M}} \|_2^2 - 2 \langle \y - \y_{\mathcal{M}}, \x - \y_{\mathcal{M}} \rangle.
\end{equation}
Subtracting \eqref{equ:finalextendeqproof2} from \eqref{equ:finalextendeqproof1} we can now see that
\begin{align}
    \left| \| f(\x) - f(\y) \|_2^2 - \| \x - \y \|_2^2 \right| \leq~ &\left| \| \Phi(\x - \y_{\mathcal{M}}) \|_2^2 - \| \x - \y_{\mathcal{M}} \|_2^2 \right| ~+ \nonumber \\
    &2\left|
    \Re \left( \langle g\left(\y - \y_{\mathcal{M}}\right), \Phi(\x - \y_{\mathcal{M}}) \rangle \right)  - \langle \y - \y_{\mathcal{M}}, \x - \y_{\mathcal{M}} \rangle \right| \nonumber \\
    \leq~ & 3\epsilon\| \x - \y_{\mathcal{M}} \|_2^2 ~+~ 4 \epsilon \| \y - \y_{\mathcal{M}} \|_2 \| \x - \y_{\mathcal{M}} \|_2 \nonumber \\
    \leq~ & 3\epsilon\| \x - \y_{\mathcal{M}} \|_2^2 ~+~ 2\epsilon \left( \| \y - \y_{\mathcal{M}} \|_2^2 + \| \x - \y_{\mathcal{M}} \|_2^2 \right)
    \label{equ:finalextendeqproof3}
\end{align}
where the second inequality again appeals to $\Phi$ being a $3 \epsilon$-JL embedding of $\overline{\mathcal{M}} - \overline{\mathcal{M}}$, and to Lemma~\ref{lem:Getu'foranyu}.  Considering \eqref{equ:finalextendeqproof3} we can see that
\begin{itemize}
\item $\| \y - \y_{\mathcal{M}} \|_2 \leq \| \y - \x \|_2$ by the definition of $\y_{\mathcal{M}}$, and so
\item $\| \x - \y_{\mathcal{M}} \|_2 \leq \| \x - \y \|_2 + \| \y - \y_{\mathcal{M}} \|_2 \leq 2 \| \x - \y \|_2$, and thus
\item $\| \y - \y_{\mathcal{M}} \|_2^2 + \| \x - \y_{\mathcal{M}} \|_2^2 \leq \left( \| \y - \y_{\mathcal{M}} \|_2 + \| \x - \y_{\mathcal{M}} \|_2 \right)^2 \leq 9 \| \x - \y \|_2^2$.
\end{itemize}
Using the last two inequalities above in \eqref{equ:finalextendeqproof3} now yields the stated result.
\end{proof}

We are now prepared to prove the two main results of this section.

\subsection{Proof of Theorem~\ref{thm:GeneralExtension}}
\label{sec:proofofGeneralExtension}

Apply Theorem~\ref{thm:ConvHullDistorionisfree} with $\epsilon \leftarrow \epsilon / 24$ in order obtain $\epsilon / 24$-convex hull distortion for $S_{\mathcal M}$ via $\Phi$.  Then, apply Lemma~\ref{lem:ExtensionlemCHDimpliesGetExtended}.

\subsection{Proof of Theorem~\ref{Thm:OptimalExistence}}
\label{sec:proofofOptimalExistence}

To begin we apply Corollary~\ref{coro:subgaussiandistorion} with, e.g., $p = 1/2$ to demonstrate that an $\left \lceil \frac{c''}{\epsilon^2} \left(w(S_{\mathcal M}) + \sqrt{\ln(4)} \right)^2 \right \rceil \times N$ matrix with i.i.d. standard normal random entries can provide $(\epsilon/24)$-convex hull distortion for $S_\mathcal{M}$, where $c''$ is an absolute constant.  Hence, such a matrix $\Phi$ exists.  An application of Lemma~\ref{lem:ExtensionlemCHDimpliesGetExtended} now finishes the proof.

\section{The Proof of Theorem~\ref{Thm:MainResult}}
\label{sec:MainThmProof}

We apply Theorem~\ref{Thm:OptimalExistence} together with Theorem~\ref{GaussianWidthOfManifodWithBoundaryViaGunther} to bound the Gaussian width of $S_{\mathcal{M}}$.

\section{A Numerical Evaluation of Terminal Embeddings}
\label{sec:Numerics}

\renewcommand{\algorithmicrequire}{\textbf{Input:}}
\renewcommand{\algorithmicensure}{\textbf{Output:}}


In this section we consider several variants of the optimization approach mentioned in Section 3.3 of \cite{NN2019} for implementing a terminal embedding $f: \mathbbm{R}^N \rightarrow \mathbbm{R}^{m+1}$ of a finite set $X \subset \mathbbm{R}^N$.  In effect, this requires us to implement a function satisfying two sets of constraints from \cite[Section 3.3]{NN2019} that are analogous to the two properties of $g:  \mathbbm{R}^N \rightarrow \mathbbm{C}^m$ listed at the beginning of the proof of Lemma~\ref{lem:Getu'foranyu}.  See Lines 1 and 2 of Algorithm~\ref{alg:FTE} for a concrete example of one type of constrained minimization problem solved herein to accomplish this task.  

\begin{algorithm}[H]
\caption{Terminal Embedding of a Finite Set} \label{alg:FTE}
\begin{algorithmic}
\Require
$\epsilon \in (0,1),~ X \subset \mathbbm{R}^N,~ \lvert X \rvert =: n,~ S \subset \mathbbm{R}^N,~ \lvert S\rvert =: n'$,~$m \in \mathbbm{N}$ with $m < N$,~a random matrix with i.i.d. standard Gaussian entries, $\Phi \in \mathbbm{R}^{m \times N}$, rescaled to perform as a JL embedding matrix $\Pi := \frac{1}{\sqrt{m}} \Phi$ 
\Ensure A terminal embedding of $X$, $f \in \mathbbm{R}^N \rightarrow \mathbbm{R}^{m+1}$, evaluated on $S$
\For {${\bf u} \in S$}
\State 1) Compute ${\bf x}_{NN} := \text{argmin}_{{\bf x} \in X} \; \| {\bf u} - {\bf x}\|_{2}$
\State 2)  Solve the following constrained minimization problem to compute a minimizer ${\bf u}' \in \mathbb{R}^m$
\vspace{-3mm}
\begin{align*}
\text{Minimize} \quad &h_{{\bf u},{\bf x}_{NN}}({\bf z}) := \| {\bf z}\|^2_2 + 2 \langle \Pi ({\bf u} - {\bf x}_{NN}),{\bf z} \rangle&\\
\text{subject to} \quad &\|{\bf z}\|_{2} \leq \|{\bf u} - {\bf x}_{NN}\|_{2}&\\
&\lvert \langle {\bf z}, \Pi ({\bf x} - {\bf x}_{NN}) \rangle  - \langle {\bf u} - {\bf x}_{NN}, {\bf x} - {\bf x}_{NN}\rangle\rvert  \leq  \epsilon \|{\bf u} - {\bf x}_{NN}\|_{2}\|{\bf x} - {\bf x}_{NN}\|_{2}~\forall {\bf x} \in X&
\end{align*}
\State 3) Compute $f: \mathbbm{R}^N \rightarrow \mathbbm{R}^{m+1}$ at ${\bf u}$ via
\vspace{-4mm}
\begin{align*}
f({\bf u}) :=
\begin{cases}
(\Pi {\bf u},0), \quad &{\bf u} \in X\cr
(\Pi {\bf x}_{NN} + {\bf u}', \sqrt{\|{\bf u} - {\bf x}_{NN} \|_{2}^2 - \|{\bf u}'\|_{2}^2}), \quad &{\bf u} \notin X\cr
\end{cases}
\end{align*}
\EndFor
\end{algorithmic}
\end{algorithm}

{\it  Crucially, we note that any choice ${\bf u}' \in \mathbb{R}^m$ of a ${\bf z}$ satisfying the two sets of constraints in Line 2 of Algorithm~\ref{alg:FTE} for a given ${\bf u} \in \mathbbm{R}^N$ is guaranteed to correspond to an evaluation of a valid terminal embedding of $X$ at ${\bf u}$ in Line 3.}  This leaves the choice of the objective function, $h_{{\bf u},{\bf x}_{NN}}$, minimized in Line 2 of Algorithm~\ref{alg:FTE} open to change without effecting its theoretical performance guarantees.  Given this setup, several heretofore unexplored practical questions about terminal embeddings immediately present themselves.  These include:
\begin{enumerate}

    \item Repeatedly solving the optimization problem in Line 2 of Algorithm~\ref{alg:FTE} to evaluate a terminal embedding of $X$ on $S$ is certainly more computationally expensive than simply evaluating a standard linear Johnson-Lindenstrauss (JL) embedding of $X$ on $S$ instead.  How do terminal embeddings empirically compare to standard linear JL embedding matrices on real-world data in the context of, e.g., compressive classification?  When, if ever, is their additional computational expense actually justified in practice?
    
    \item Though any choice of  objective function $h_{{\bf u},{\bf x}_{NN}}$ in Line 2 of Algorithm~\ref{alg:FTE} must result in a terminal embedding $f$ of $X$ based on the available theory, some choices probably lead to better empirical performance than others.  What's a good default choice?
    
    \item How much dimensionality reduction are terminal embeddings capable of in the context of, e.g., accurate compressive classification using real-world data?   
    
\end{enumerate}

In keeping with the motivating application discussed in Section~\ref{sec:motivating_application}above, we will explore some preliminary answers to these three questions in the context of compressive classification based on real-world data below.  

\subsection{A Comparison Criteria:  Compressive Nearest Neighbor Classification}

Given a labelled data set $\mathcal{D} \subset \mathbbm{R}^N$ with label set $\mathcal{L}$, we let $Label: \mathcal{D} \rightarrow \mathcal{L}$ denote the function which assigns the correct label to each element of the data set.  To address the three questions above we will use compressive nearest neighbor classification accuracy as a primary measure of an embedding strategy's quality.  See Algorithm~\ref{Alg:Class} for a detailed description of how this accuracy can be computed for a given data set $\mathcal{D}$.

\begin{algorithm}[H]
\caption{Measuring Compressive Nearest Neighbor Classification Accuracy} \label{Alg:Class}
\begin{algorithmic}
\Require $\epsilon \in (0,1)$, A labeled data set $\mathcal{D} \subset \mathbbm{R}^N$ split into two disjoint subsets: A training set $X \subset \mathcal{D}$ with $\lvert X \rvert =: n$, and a test set $S \subset \mathcal{D}$ with $\lvert S\rvert =: n'$, such that $S \cap X = \emptyset$.  A compressive dimension $m < N$.  
\Ensure Successful Nearest Neighbor Classification Percentage for Data Embedded in $\mathbbm{R}^{m+1}$ 
\State Fix $f: \mathbbm{R}^N \rightarrow \mathbbm{R}^{m+1}$, an embedding of the training data $X \subset \mathbbm{R}^N$ into $\mathbbm{R}^{m+1}$ satisfying
$$(1 - \epsilon) \| {\bf x} - {\bf y} \|_2 \leq \left\| f({\bf x}) - f({\bf y}) \right\|_2 \leq (1 + \epsilon) \| {\bf x} - {\bf y} \|_2$$
for all ${\bf x},{\bf y} \in X$.  [Note: this can either be a JL-embedding of $X$, or a stronger terminal embedding of $X$.]\\

\State \% {\it Embed the training data into $\mathbbm{R}^{m+1}$.}
\For{${\bf x} \in X$}
\State Compute $f({\bf x})$ using, e.g., Algorithm~\ref{alg:FTE}.
\EndFor\\

\State \% {\it Classify the test data using its embedded distance in $\mathbbm{R}^{m+1}$.}
\State $p = 0$ 
\For{${\bf u} \in S$}
\State Compute $f({\bf u})$ using, e.g., Algorithm~\ref{alg:FTE}
\State Compute ${\bf x} = \text{argmin}_{ {\bf y} \in X} \; \|f({\bf u}) -  f({\bf y})\|_{2}$
\If{$Label({\bf u}) = Label({\bf x})$}
\State $p = p + 1$
\EndIf
\EndFor\\

\State Output the Successful Classification Percentage = $\dfrac{p}{n'} \times 100\%$
\end{algorithmic}
\end{algorithm}

Note that Algorithm~\ref{Alg:Class} can be used to help us compare the quality of different embedding strategies.  For example, one can use  Algorithm~\ref{Alg:Class} to compare different choices of objective functions $h_{{\bf u},{\bf x}_{NN}}$ in Line 2 of Algorithm~\ref{alg:FTE} against one another by running Algorithm~\ref{Alg:Class} multiple times on the same training and test data sets while only varying the implementation of Algorithm~\ref{alg:FTE} each time.  This is exactly the type of approach we will use below.  Of course, before we can begin we must first decide on some labelled data sets $\mathcal{D}$ to use in our classification experiments.

\subsection{Our Choice of Training and Testing Data Sets}
\label{sec:DataSets}

Herein we consider two standard benchmark image data sets which allow for accurate uncompressed Nearest Neighbor (NN) classification.  The images in each data set can then be vectorized and embedded using, e.g., Algorithm~\ref{alg:FTE} in order to test the accuracies of compressed NN classification variants against both one another, as well as against standard uncompressed NN classification.  These benchmark data sets are as follows. 

\begin{figure}[H] 
\centering
\includegraphics[width=0.8\textwidth]{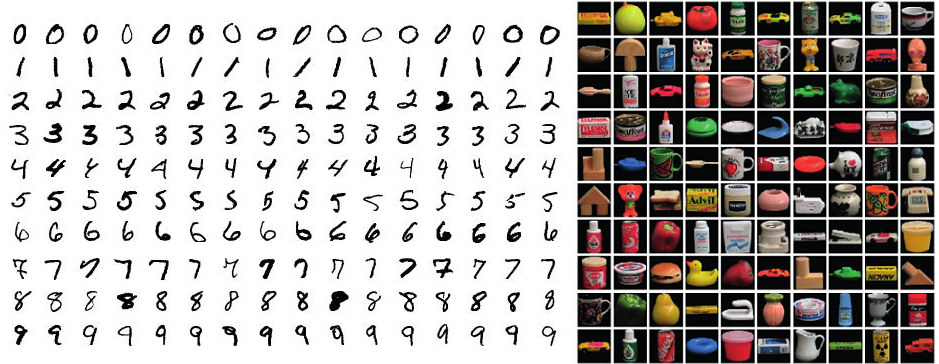}
\caption{Example images from the MNIST data set (left), and the COIL-100 data set (right).}
\label{fig:exampdata}
\end{figure}

{\bf The MNIST data set \cite{mnisthandwrittendigit-1998,deng2012mnist}} consists of 60,0000 training images of $28 \times 28$-pixel grayscale hand-written images of the digits $0$ through $9$. Thus, MNIST $10$ labels to correctly classify between, and $N = 28^2 = 784$.  For all experiments involving the MNIST dataset $n / 10$ digits of each type are selected uniformly at random to form the training set $X$, for a total of $n$ vectorized training images in $\mathbbm{R}^{784}$.  Then, $100$ digits of each type are randomly selected from those not used for training in order to form the test set $S$, leading to a total of $n' = 1000$ vectorized test images in $\mathbbm{R}^{784}$.  See the left side of Figure~\ref{fig:exampdata} for example MNIST images.\\

{\bf The COIL-100 data set \cite{nene1996columbia}} is a collection of $128 \times 128$-pixel color images of $100$ objects, each photographed $72$ times where the object has been rotated by $5$ degrees each time to get a complete rotation.  However, only the green color channel of each image is used herein for simplicity.  Thus, herein COIL-100 consists of $7,200$ total vectorized images in $\mathbbm{R}^{N}$ with $N = 128^2 = 16,384$, where each image has one of $100$ different labels (72 images per label).  For all experiments involving this COIL-100 data set, $n / 100$ training images are down sampled from each of the $100$ objects' rotational image sequences.  Thus, the training sets each contain $n / 100$ vectorized images of each object, each photographed at rotations of $\approx 36000 / n$ degrees (rounded to multiples of $5$).  The resulting training data sets therefore all consist of $n$ vectorized images in $\mathbbm{R}^{16,384}$.  After forming each training set, $10$ images of each type are then randomly selected from those not used for training in order to form the test set $S$, leading to a total of $n' = 1000$ vectorized test images in $\mathbbm{R}^{16,384}$ per experiment.  See the right side of Figure~\ref{fig:exampdata} for example COIL-100 images.

\subsection{A Comparison of Four Embedding Strategies via NN Classification}

In this section we seek to better understand $(i)$ when terminal embeddings outperform standard JL-embedding matrices in practice with respect to accurate compressive NN classification, $(ii)$ what type of objective functions $h_{{\bf u},{\bf x}_{NN}}$ in Line 2 of Algorithm~\ref{alg:FTE} perform best in practice when computing a terminal embedding, and $(iii)$ how much dimensionality reduction one can achieve with a terminal embedding without appreciably degrading standard NN classification results in practice.  To gain insight on these three questions we will compare the following four embedding strategies in the context of NN classification.  These strategies begin with the most trivial linear embeddings (i.e., the identity map) and slowly progress toward extremely non-linear terminal embeddings.
\begin{itemize}
    \item[(a)] {\bf Identity:}  We use the data in its original uncompressed form (i.e., we use the trivial embedding $f: \mathbbm{R}^N \rightarrow \mathbbm{R}^N$ defined by $f({\bf u}) = {\bf u}$ in Algorithm~\ref{Alg:Class}).  Here the embedding dimension $m+1$ is always fixed to be $N$.
    
    \item[(b)] {\bf Linear:}  We compressively embed our training data $X$ using a JL embedding.  More specifically, we generate an $m \times N$ random matrix $\Phi$ with i.i.d. standard Gaussian entries and then set
    $f: \mathbbm{R}^N \rightarrow \mathbbm{R}^{m+1}$ to be $f({\bf u}) := \left( \frac{1}{\sqrt{m}}\Phi {\bf u},~0 \right)$ in Algorithm~\ref{Alg:Class} for various choices of $m$.  It is then hoped that $f$ will embed the test data $S$ well in addition to the training data $X$.  Note that this embedding choice for $f$ is consistent with Algorithm~\ref{alg:FTE} where one lets $X = X \cup S$ when evaluating Line 3, thereby rendering the minimization problem in Line 2 irrelevant.
    
    \item [(c)]{\bf A Valid Terminal Embedding That's as Linear as Possible:}  To minimize the pointwise difference between the terminal embedding $f$ computed by Algorithm~\ref{alg:FTE} and the linear map defined above in (b), we may choose the objective function in Line 2 of Algorithm~\ref{alg:FTE} to be $h_{{\bf u},{\bf x}_{NN}}({\bf z}) := \langle \Pi ({\bf x}_{NN} - {\bf u}),{\bf z} \rangle$.  To see why solving this minimizes the pointwise difference between $f$ and the linear map in (b), let ${\bf u}'$ be such that $\langle \Pi ({\bf x}_{NN} - {\bf u}),{\bf z} \rangle$ is minimal subject to the constraints in Line 2 of Algorithm~\ref{alg:FTE} when ${\bf z} = {\bf u}'$.  Since ${\bf u}$ and ${\bf x}_{NN}$ are fixed here, we note that ${\bf z} = {\bf u}'$ will then also minimize
    \begin{align*}
    ~&~\left\| \Pi ({\bf x}_{NN} - {\bf u}) \right\|_2^2 + 2\langle \Pi ({\bf x}_{NN} - {\bf u}),{\bf z} \rangle + \| {\bf u} - {\bf x}_{NN} \|_2^2\\
    =&~\left\| \Pi ({\bf x}_{NN} - {\bf u}) \right\|_2^2 + \| {\bf z} \|_2^2+ 2\langle \Pi ({\bf x}_{NN} - {\bf u}),{\bf z} \rangle + \| {\bf u} - {\bf x}_{NN} \|_2^2 - \| {\bf z} \|_2^2\\
    =&~\left\| \Pi ({\bf x}_{NN} - {\bf u}) +{\bf z} \right\|_2^2 + \| {\bf u} - {\bf x}_{NN} \|_2^2 - \| {\bf z} \|_2^2\\
    =&~\left\| \left(\Pi {\bf x}_{NN} +{\bf z}, \sqrt{\| {\bf u} - {\bf x}_{NN} \|_2^2 - \| {\bf z} \|_2^2} \right) - (\Pi {\bf u},0) \right\|_2^2
    \end{align*}
    subject to the desired constraints.  Hence, we can see that choosing ${\bf z} = {\bf u}'$ as above is equivalent to minimizing $\| f({\bf u}) - (\Pi {\bf u},0) \|_2^2$ over all valid choices of terminal embeddings $f$ that satisfy the existing theory.
    
    \item[(d)]{\bf A Terminal Embedding Computed by Algorithm~\ref{alg:FTE} as Presented:}  This terminal embedding is computed using Algorithm~\ref{alg:FTE} exactly as it is formulated above (i.e., with the objective function in Line 2 chosen to be $h_{{\bf u},{\bf x}_{NN}}({\bf z}) := \| {\bf z}\|^2_2 + 2 \langle \Pi ({\bf u} - {\bf x}_{NN}),{\bf z} \rangle$).  Note that this choice of objective function was made to encourage non-linearity in the resulting terminal embedding $f$ computed by Algorithm~\ref{alg:FTE}.  To understand our intuition for making this choice of objective function in order to encourage non-linearity in $f$, suppose that $\| {\bf z}\|^2_2 + 2 \langle \Pi ({\bf u} - {\bf x}_{NN}),{\bf z} \rangle$ is minimal subject to the constraints in Line 2 of Algorithm~\ref{alg:FTE} when ${\bf z} = {\bf u}'$.  Since ${\bf u}$ and ${\bf x}_{NN}$ are fixed independently of ${\bf z}$ this means that ${\bf z} = {\bf u}'$ then also minimize 
    \begin{align*}
        \|{\bf z} \|^2_2 + 2 \langle \Pi ({\bf u} - {\bf x}_{NN}),{\bf z} \rangle + \|\Pi({\bf u} - {\bf x}_{NN})\|^2_2 &= \| {\bf z} + \Pi({\bf u} - {\bf x}_{NN}) \|^2_2.
    \end{align*}
    Hence, this objection function is encouraging ${\bf u}'$ to be as close to $-\Pi({\bf u} - {\bf x}_{NN}) ~=~ \Pi({\bf x}_{NN} - {\bf u})$ as possible subject to satisfying the constraints in Line 2 of Algorithm~\ref{alg:FTE}.  Recalling (c) just above, we can now see that this is exactly encouraging ${\bf u}'$ to be a value for which the objective function we seek to minimize in (c) is relatively large.
\end{itemize}

We are now prepared to empirically compare the four types of embeddings (a) -- (d) on the data sets discussed above in Section~\ref{sec:DataSets}.  To do so, we run Algorithm~\ref{Alg:Class} four times for several different choices of embedding dimension $m$ on each data set below, varying the choice of embedding $f$ between (a), (b), (c), and (d) for each value of $m$.  The successful classification percentage is then plotted as a function of $m$ for each different data set and choice of embedding.  See Figures~\ref{fig:CompareEmbeddings}(a) and~\ref{fig:CompareEmbeddings}(c) for the results.  In addition, to quantify the extent to which the embedding strategies (b) -- (d) above are increasingly nonlinear, we also measure the relative distance between where each training-set embedding $f$ maps points in the test sets versus where its associated linear training-set embedding would map them.  More specifically, for each embedding $f$ and test point ${\bf u} \in S$ we let
\begin{align*}
\text{Nonlinearity}_{f}({\bf u}) = \dfrac{\|f( {\bf u}) - (\Pi {\bf u},0)\|_{2}}{\|(\Pi {\bf u},0)\|_{2}} \times 100\%
\end{align*}
See Figures~\ref{fig:CompareEmbeddings}(b) and~\ref{fig:CompareEmbeddings}(d) for plots of $$\text {Mean}_{{\bf u} \in S} \text{ Nonlinearity}_{f}({\bf u})$$ for each of the embedding strategies (b) -- (d) on the data sets discussed in Section~\ref{sec:DataSets}.

To compute solutions to the minimization problem in Line 2 of Algorithm \ref{alg:FTE} below we used the MATLAB package CVX \cite{cvx,gb08} with the initialization ${\bf z}_0 = \Pi ( {\bf u} - {\bf x}_{NN})$ and $\epsilon = 0.1$ in the constraints. All simulations were performed using MATLAB R2021b on an Intel desktop with a 2.60GHz i7-10750H CPU and 16GB DDR4 2933MHz memory. All code used to generate the figures below is publicly available at \url{https://github.com/MarkPhilipRoach/TerminalEmbedding}.

\begin{figure}[H] 
\centering
\includegraphics[width=0.7\textwidth]{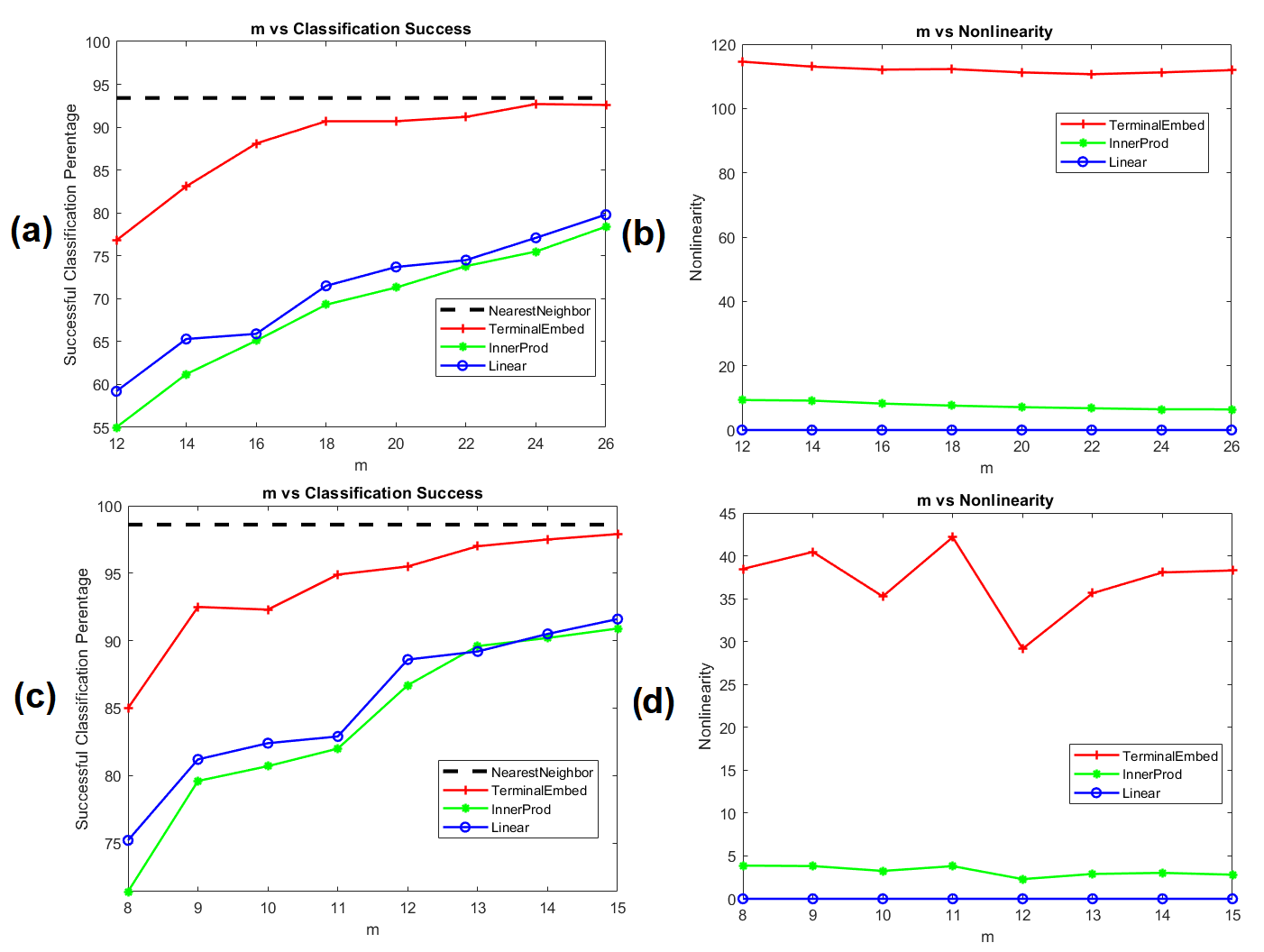}
\caption{Figures \ref{fig:CompareEmbeddings}(a) and \ref{fig:CompareEmbeddings}(b) concern the MNIST data set with training set size $n = 4000$ and test set size $n' = 1000$ in all experiments.  Similarly, Figures \ref{fig:CompareEmbeddings}(c) and \ref{fig:CompareEmbeddings}(d) concern the COIL-100 dataset with training set size $n = 3600$ and test set size $n' = 1000$ in all experiments.  In both Figures \ref{fig:CompareEmbeddings}(a) and \ref{fig:CompareEmbeddings}(c) the dashed black ``NearestNeighbor" line plots the classification accuracy when the {\bf Identity} map (a) is used in Algorithm~\ref{Alg:Class}.  Note that the ``NearestNeighbor" line is independent of $m$ because the indentity map involves no compression.  Similarly, in all of the Figures \ref{fig:CompareEmbeddings}(a) -- \ref{fig:CompareEmbeddings}(d) the red ``TerminalEmbed" curves correspond to the use of Algorithm~\ref{alg:FTE} as it's presented to compute highly non-linear terminal embeddings  (embedding strategy (d) above), the green ``InnerProd" curves correspond to the use of nearly linear terminal embeddings (embedding strategy (c) above), and the blue ``Linear" curves correspond to the use of {\bf Linear} JL embedding matrices (embedding strategy (b) above).
}
\label{fig:CompareEmbeddings}
\end{figure}

Looking at Figure~\ref{fig:CompareEmbeddings} one can see that the most non-linear embedding strategy (d) -- i.e., Algorithm~\ref{alg:FTE}
-- allows for the best compressed NN classification performance, outperforming standard linear JL embeddings for all choices of $m$.  Perhaps most interestingly, it also quickly converges to the uncompressed NN classification performance, matching it to within $1$ percent at the values of $m = 24$ for MNIST and $m = 15$ for COIL-100.  This corresponds to relative dimensionality reductions of 
$$100(1 - 24/784) \% \approx 96.9 \% $$
and
$$100(1 - 15/16384) \% \approx 99.9 \%,$$
respectively, with negligible loss of NN classification accuracy.  As a result, it does indeed appear as if nonlinear terminal embeddings have the potential to allow for improvements in dimensionality reduction in the context of classification beyond what standard linear JL embeddings can achieve.

Of course, challenges remain in the practical application of such nonlinear terminal embeddings.  Principally, their computation by, e.g., Algorithm~\ref{alg:FTE} is orders of magnitude slower than simply applying a JL embedding matrix to the data one wishes to compressively classify.  Nonetheless, if dimension reduction at all costs is one's goal, terminal embeddings appear capable of providing better results than their linear brethren.  And, recent theoretical work \cite{cherapanamjeri2022terminal} aimed at lessening their computational deficiencies looks promising.

\subsection{Additional Experiments on Effective Distortions and Run Times}

In this section we further investigate the best performing terminal embedding strategy from the previous section (i.e., Algorithm~\ref{alg:FTE}) on the MNIST and COIL-100 data sets.  In particular, we provide illustrative experiments concerning the improvement of $(i)$ compressive classification accuracy with training set size, and $(ii)$ the effective distortion of the terminal embedding with embedding dimension $m+1$.  Furthermore, we also investigate $(iii)$ the run time scaling of Algorithm~\ref{alg:FTE}.

To compute the effective distortions of a given (terminal) embedding of training data $X$, $f: \mathbbm{R}^N \rightarrow \mathbbm{R}^{m+1}$, over all available test and train data $X \cup S$ we use 
\begin{align*}
\text{MaxDist}_f = \underset{{\bf x} \in X}{\max} \; \underset{{\bf u} \in S \cup X \setminus \{ {\bf x} \}}{\max} \dfrac{\|f({\bf u}) - f({\bf x})\|_{2}}{\| {\bf u} - {\bf x}\|_{2}}, \quad \text{MinDist}_f = \underset{{\bf x} \in X}{\min} \; \underset{{\bf u} \in S \cup X \setminus \{ {\bf x} \}}{\min} \dfrac{\|f({\bf u}) - f({\bf x})\|_{2}}{\| {\bf u} - {\bf x}\|_{2}}.
\end{align*}
Note that these correspond to estimates of the upper and lower multiplicative distortions, respectively, of a given terminal embedding in \eqref{OptimalTermEmbed}. In order to better understand the effect of the minimizer ${\bf u}'$ of the minimization problem in Line 2 of Algorithm~\ref{alg:FTE} on the final embedding $f$, we will also separately consider the effective distortions of its component linear JL embedding ${\bf u} \mapsto (\Pi {\bf u}, 0)$ below.  See Figures~\ref{fig:MNISTFigure} and~\ref{fig:COILFigure} for such plots using the MNIST and COIL-100 data sets, respectively.

\begin{figure}[H] 
\centering
\includegraphics[width=1\textwidth]{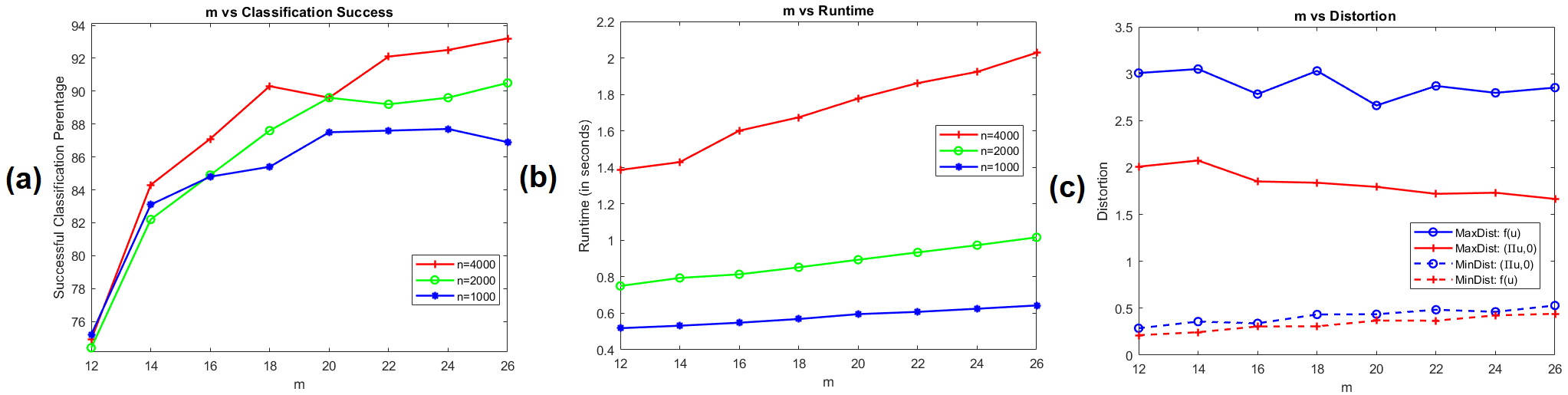}
\caption{This figure compares (a) compressive NN classification accuracies, and (b) the classification run times of Algorithm~\ref{Alg:Class} averaged over all ${\bf u} \in S$, on the MNIST data set.  Three different training data set sizes $n = \lvert X \rvert \in \{ 1000,~ 2000,~ 4000\}$ were fixed as the embedding dimension $m+1$ varied for each of the first two subfigures.  Recall that the test set size is always fixed to $n' = 1000$.  In addition, Figure (c) compares MaxDist$_f$ and MinDist$_f$ for the nonlinear $f$ computed by Algorithm \ref{alg:FTE} versus its component linear embedding ${\bf u} \mapsto (\Pi {\bf u}, 0)$ as $m$ varies for a fixed embedded training set size of $n = 4000$. 
}
\label{fig:MNISTFigure}
\end{figure}

\begin{figure}[H] 
\centering
\includegraphics[width=1\textwidth]{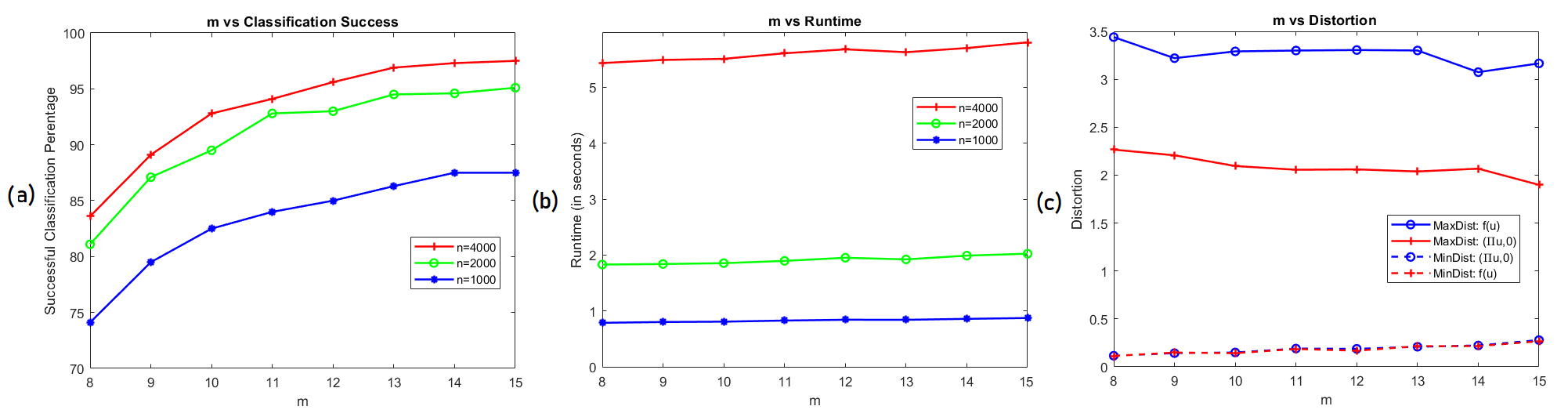}
\caption{Figures (a) and (b) here are run with identical parameters as for their corresponding subfigures in Figure~\ref{fig:MNISTFigure}, except using the COIL-100 data set.  Similarly, Figure (c) compares MaxDist$_f$ and MinDist$_f$ for the nonlinear $f$ computed by Algorithm \ref{alg:FTE} versus its component linear embedding ${\bf u} \mapsto (\Pi {\bf u}, 0)$ as $m$ varies for a fixed embedded training set size of $n = 3600$. 
}
\label{fig:COILFigure}
\end{figure}

Looking at Figures~\ref{fig:MNISTFigure} and~\ref{fig:COILFigure} one notes several consistent trends.  First, compressive classification accuracy increases with both training set size $n$ and embedding dimension $m$, as generally expected.  Second, compressive classification run times also increase with training set size $n$ (as well as more mildly with embedding dimension $m$).  This is mainly due to the increase in the number of constraints in Line 2 of Algorithm~\ref{alg:FTE} with the training set size $n$.  Finally, the distortion plots indicate that the nonlinear terminal embeddings $f$ computed by Algorithm~\ref{alg:FTE} tend to preserve the lower distortions of their component linear JL embeddings while simultaneously increasing their upper distortions.  As a result, the nonlinear terminal embeddings considered here appear to spread the initially JL embedded data out, perhaps pushing different classes away from one another in the process.  If so, it would help explained the increased compressive NN classification accuracy observed for Algorithm~\ref{alg:FTE} in Figure~\ref{fig:CompareEmbeddings}.

\section*{Acknowledgements} Mark Iwen was supported in part by NSF DMS 2106472.  Mark Philip Roach was supported in part by NSF DMS 1912706.  Mark Iwen would like to dedicate his effort on this paper to William E. Iwen, 1/27/1939 -- 12/10/2021, as well as to all those at the VA Medical Center in St. Cloud, MN who cared for him so tirelessly during the COVID-19 pandemic.\footnote{\url{https://www.greenbaypressgazette.com/obituaries/wis343656}}  Thank you.

\bibliographystyle{plain}
\bibliography{tensor}

\end{document}